\documentclass[12pt, amscd]{amsart}
\usepackage[letterpaper,hmargin=1.25in,vmargin=1.5in]{geometry}
\usepackage{amscd}
\usepackage{verbatim}
\usepackage{color}

\input xy
\xyoption{all}

\usepackage{amssymb}
\usepackage{hyperref}

\DeclareMathAlphabet{\cat}{OT1}{cmss}{m}{sl}

\newtheorem{theorem}{Theorem}[section]

\newtheorem{proposition}[theorem]{Proposition}
\newtheorem{lemma}[theorem]{Lemma}
\newtheorem{corollary}[theorem]{Corollary}

\theoremstyle{definition}
\newtheorem{remark}[theorem]{Remark}

\newtheorem{example}[theorem]{Example}

%renew

%simple

\newcommand{\tens}{\otimes}

%stackrel

%mathrm

%operator

\newcommand{\CH}{\operatorname{CH}}

\renewcommand{\Im}{\operatorname{Im}}

\newcommand{\Ker}{\operatorname{Ker}}

\newcommand{\Pic}{\operatorname{Pic}}

\newcommand{\ind}{\operatorname{\hspace{0.3mm}ind}}

\newcommand{\res}{\operatorname{res}}

\newcommand{\Br}{\operatorname{Br}}
\newcommand{\Spec}{\operatorname{Spec}}

\newcommand{\SB}{\operatorname{SB}}

\newcommand{\gSL}{\operatorname{\mathbf{SL}}}

\newcommand{\rank}{\operatorname{rank}}

\newcommand{\tors}{\operatorname{\cat{tors}}}
%bb

\renewcommand{\P}{\mathbb{P}}
\newcommand{\Z}{\mathbb{Z}}

%cal

%frak

\title[On the torsion of Chow groups of Severi-Brauer varieties] % colontitle
{On the torsion of Chow groups of Severi-Brauer varieties}

\author
[S. Baek] {Sanghoon Baek}

\address
{Department of Mathematical Sciences, KAIST, 291 Daehak-ro, Yuseong-gu, Daejeon, 305-701, Republic of Korea}

\email {sanghoonbaek@kaist.ac.kr}

\begin{document}

\maketitle

\begin{abstract}
In this paper, we generalize a result of Karpenko on the torsion in the second quotient of the gamma filtration for Severi-Brauer varieties to higher degrees. As an application, we provide a nontrivial torsion in higher Chow groups and the topological filtration of the associated generic variety and obtain new upper bounds for the annihilators of the torsion subgroups in the Chow groups of a large class of Severi-Brauer varieties. In particular, using the torsion in higher degrees, we show indecomposability of certain algebras.
\end{abstract}

%---------             Section 1                    ------------------------------------------------------
\section{Introduction}\label{intro}

Let $p$ be a prime and $A$ a central simple algebra of $p$-power degree over a field. We denote by $\SB(A)$ the corresponding Severi-Brauer variety. Consider the Grothendieck ring $K_{0}(\SB(A))$ and its gamma filtration $\Gamma^{d}(\SB(A))$ of degree $d\geq 0$. By a theorem of Quillen, the gamma filtration on $K_{0}(\SB(A))$ is determined by the indices of (tensor) $p$-powers of $A$, whose exponents form a \emph{reduced sequence}. In \cite{Kar}, Karpenko provided a lower bound for the size of the torsion subgroup of the $2$nd quotient $\Gamma^{2/3}(\SB(A))$ of the gamma filtration and showed this bound is sharp if the corresponding reduced sequence has length one. Note that the $0$th and the $1$st quotients are torsion-free.

In the present paper, we provide a general lower bound for the size of the torsion subgroup $\Gamma^{d/d+1}(\SB(A))_{\tors}$, which extends Karpenko's bound. In particular, we determine a list of degrees where the torsion subgroup is nontrivial and show that the lower bound is sharp for certain algebras including the case where $A$ has the doubly reduced sequence of length one.   

We apply the lower bound obtained by the gamma filtration to the torsion subgroups in both topological filtration and the Chow group of the Severi-Brauer variety associated to the corresponding generic division algebra. As a consequence, we find a nontrivial torsion of the Chow groups in higher codimensions. In general, it is wide open to find a nontrivial torsion in higher Chow groups of a projective homogeneous variety: a result for projective quadrics can be seen in \cite{Vishik}. Applying our result on the torsion in higher quotient of the topological filtration, we also show the indecomposability of certain generic algebras, which can be done by using the torsion in the $2$nd quotient of the topological filtration \cite{Kar}.

Consider the torsion subgroup $\CH^{d}(\SB(A))_{\tors}$ of codimension $d$. It is well-known that for $d=0, 1, \dim(\SB(A))$, the torsion subgroups are all trivial. However, in general, the torsion subgroup is non-trivial for $d\geq 2$ as we mentioned above. Therefore, the problem of determining the Chow group of a Severi-Brauer variety reduces to computing the torsion subgroup in such a codimension $d$. For $d=2$, the torsion subgroup is determined when the exponent of $A$ is a prime \cite{Kar}. But, it appears to be typically difficult to determine the torsion in higher codimensions as one can see the results on other projective homogeneous varieties \cite{GMS}, \cite{Pey}, \cite{Kar96}, \cite{GZ10}, \cite{BZZ}. In particular, the torsion in the Chow group of codimension $4$ of a quadric can be infinitely generated \cite{KM}.

In this paper, we provide a new bound for the annihilator of $\CH^{d}(\SB(A))_{\tors}$ in order to estimate the torsion in higher codimensions. In general, the only known bound for the annihilator of the torsion subgroup of any codimension is the index of $A$. For $d=2$, it was shown that the torsion subgroup is annihilated by the exponent of $A$ which is given by the Rost invariant \cite{GMS}, \cite{Pey}. In particular, if in addition $A$ has the reduced sequence of length one, then the torsion subgroup is annihilated by the order of  $\Gamma^{2/3}(\SB(A))_{\tors}$ \cite{Kar}. In the present paper, we extend this to both the reduced sequence of arbitrary length and higher codimensions. Consequently, for a large class of central simple algebras, we give sharper bounds for the annihilator than the previously known ones.

This paper is organized as follows. In Section $2$, we recall some results on the gamma filtration for a Severi-Brauer variety \cite{Kar} and extend them to both higher codimension and an arbitrary reduced sequence (Theorem \ref{mainprop}). Then, we apply the main theorem to obtain the degrees of the gamma filtration where the torsion subgroup is nontrivial and show the lower bound is sharp for certain cases (Corollaries \ref{mainpropcor} and \ref{indexsquareexponentp}). We also provide an upper bound for the annihilator of the gamma filtration (Proposition \ref{propsecond}), which will be used in the last section. Sections $3$ and $4$ are devoted to applications of the main result: In Section $3$ we provide a general lower bound for the size of the torsion subgroups in both the topological filtration and the Chow group (Corollay \ref{corollaryone}) and use it to determine the indecomposablility of certain algebras (Corollary \ref{coroforindecomnew}). In the last section, we apply Proposition \ref{propsecond} to obtain an improved bound for the annihilator of the torsion in the Chow group (Corollary \ref{secondcoro}). At the end we provide several examples of computations for the annihilators (Examples \ref{exmpleforchowthm}).

We use the following notation. The base field is denoted by $F$. Given a variety $X$ (resp. an algebra $A$) over $F$ and a field extension $E/F$, we write $X_{E}$ (resp. $A_{E}$) for $X\times_{\Spec{F}} \Spec{E}$ (resp. $A\tens_{F} E$). For an abelian group $G$, we denote by $G_{\tors}$ the torsion subgroup of $G$. If $p$ is a prime integer, we write $v_{p}$ for the $p$-adic valuation on the rational numbers.

%---------             Section 2                    ------------------------------------------------------
\section{Torsion in the gamma filtration of Severi-Brauer varieties}\label{secondsection}

In the present section we recall some known results on the gamma filtration of the Grothendieck ring $K_{0}$ of a Severi-Brauer variety. For a smooth projective variety $X$, the gamma filtration of $K_{0}(X)$ is given by the ideals 
\[\Gamma^{d}(X)=\langle \gamma_{d_{1}}(x_{1})\cdots \gamma_{d_{n}}(x_{n})\mid d_{1}+\cdots+d_{n}\geq d,\, x_{i}\in \Gamma^{1}(X)\rangle,\]
where $\gamma_{d_{i}}$ is the gamma operation on $K_{0}(X)$ and $\Gamma^{1}(X)=\Ker(K_{0}(X)\stackrel{\rank}\to \Z)$. We shall write $\Gamma^{d/d+1}(X)$ for the subsequent quotient $\Gamma^{d}(X)/\Gamma^{d+1}(X)$. For example, we have $\Pic(X)\simeq \Gamma^{1/2}(X)$, where $\Pic(X)$ is the Picard group of $X$. For details and the general theory of the gamma filtration, we refer the reader to \cite{Kar} and to \cite{Manin}, \cite{FL}. 

The goal of this section is to find a nontrivial torsion in the quotient $\Gamma^{d/d+1}(\SB(A))$ of a $p$-primary algebra $A$ and to provide a general lower bound for the size of the torsion subgroup $\Gamma^{d/d+1}(\SB(A))_{\tors}$ (Theorem \ref{mainprop}). In particular, we show that the lower bound is sharp if $A$ has the doubly reduced sequence of length $1$ or $A$ is of index $p^{2}$ (resp. $8$) and exponent $p$ (resp. $2$) for $p$ an odd prime (Corollaries \ref{mainpropcor} and \ref{indexsquareexponentp}). We also provide an improved bound for the annihilator of the torsion subgroup $\Gamma^{d/d+1}(\SB(A))_{\tors}$ (Proposition \ref{propsecond}).

Let $A$ be a central simple algebra of degree $\deg(A)$ over $F$, where the degree of $A$ is the square root of $\dim(A)$. Consider the restriction map 
\begin{equation}\label{Quillenres}
K_{0}(\SB(A))\to K_{0}(\P^{\deg(A)-1}_{E})\simeq \Z[x]/(x-1)^{\deg(A)},
\end{equation}
 where we identify $\SB(A)$ over a splitting field $E$ with the projective space $\P^{\deg(A)-1}_{E}$ and $x$ is the class of the tautological line bundle on $\P^{\deg(A)-1}_{E}$. Then by \cite[\S 8 Theorem 4.1]{Qui} the image of this map coincides with the sublattice with basis 
\begin{equation}\label{Quillenbasislattice}
\{ \ind(A^{\tens i})\cdot x^{i} \,|\,\, 0\leq i\leq \deg(A)-1\},
\end{equation}where $\ind(A)$ denotes the index of $A$. Hence, as observed in \cite[Corollary 3.2]{Kar} the gamma filtration on $K_{0}(\SB(A))$ depends only on $\ind(A^{\tens i})$. For instance, if $n=\ind(A)$, then we have $nx\in K_{0}(\SB(A))$ and $\gamma_{t}(nx-n)=\gamma_{t}(x-1)^{n}=(1+(x-1)t)^{n}$, where $\gamma_{t}$ is the power series associated to the gamma operations.  Hence, for any $d\geq0$ we obtain $$\gamma_{d}(nx-n)={{n}\choose{d}}(x-1)^{d}\in \Gamma^{d}(\SB(A)).$$

Given a $p$-primary algebra $A$, let $\alpha(k)=v_{p}(\ind(A^{\tens p^{k}}))$ for $0\leq k\leq v_{p}(\exp(A))$, where the exponent $\exp(A)$ of $A$ is the order of the class of $A$ in the Brauer group $\Br(F)$. The sequence $\big(\alpha(k)\big)$ is called the $p$-\emph{sequence} of $A$ of \emph{length} $v_{p}(\exp(A))$. Indeed, the elements of this sequence are distinct elements of $\{\ind(A^{\tens i})\,|\, i\geq 0\}$. This sequence is strictly decreasing such that the last term is $0$. Moreover, by \cite[Construction 2.8]{SV} or \cite[Lemma 3.10]{Kar}, any such sequence is the $p$-sequence of a division algebra of $p$-power degree.

Let $A$ be a central simple algebra of $p$-power degree and $\big(\alpha(k)\big)$ be its $p$-sequence. For any integer $\beta(k)\geq 0$, we write $p(\alpha(k),\beta(k))$ for $p^{\alpha(k)}/\gcd(\beta(k), p^{\alpha(k)})$. Then, by \cite[Proposition 4.1]{Kar}, the $d$-th gamma filtration $\Gamma^{d}(\SB(A))$ of the corresponding Severi-Brauer variety $\SB(A)$ is generated by
\begin{equation}\label{gengamma}
\prod_{k=0}^{\exp(A)}p(\alpha(k),\beta(k))(x^{p^{k}}-1)^{\beta(k)} \text{ for all } \sum_{k=0}^{\exp(A)}\beta(k)\geq d,
\end{equation}
where $x$ is the class of the tautological line bundle on $\P^{\,\deg(A)-1}_{E}$. We shall denote by
\[ f(\alpha(k), \beta(k))\]
the $k$-th entry of the product in (\ref{gengamma}). Indeed, the number of generators in (\ref{gengamma}) can be reduced as follows.

\begin{lemma}\cite[Proposition 4.10]{Kar}\label{Karlemma}
Let $A$ be a central simple algebra of $p$-power degree and $\big(\alpha(k)\big)$ be the $p$-sequence. Then, the $d$-th gamma filtration $\Gamma^{d}(\SB(A))$ of the corresponding Severi-Brauer variety $\SB(A)$ is generated by $(\ref{gengamma})$ with the condition that $\beta(k)=0$ for all $k$
satisfying $\alpha(k)=\alpha(k-1)-1$.
\end{lemma}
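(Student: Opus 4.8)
The plan is to show directly that any generator of $\Gamma^{d}(\SB(A))$ of the form $(\ref{gengamma})$ which violates the asserted vanishing condition can be rewritten as a $\Z$-linear combination of generators of the same form which satisfy it. Call an index $k$ \emph{bad} if $\alpha(k) = \alpha(k-1) - 1$. Since the $p$-sequence is strictly decreasing with $\alpha(0) = v_{p}(\ind A)$, the index $0$ is never bad, and every bad index lies in $\{1, \dots, v_{p}(\exp A)\}$. For a generator $G = \prod_{k} f(\alpha(k), \beta(k))$, let $m(G)$ denote the largest bad index $k$ with $\beta(k) > 0$, and put $m(G) = 0$ if no such index exists. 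The generators with $m(G) = 0$ are precisely those listed in the statement, so it suffices to prove: for every $k \ge 1$, each generator $G$ with $m(G) = k$ lies in the subgroup generated by generators with $m < k$. Descending induction on $m$ then drives every generator down to the case $m = 0$.

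The engine of the reduction is the identity $x^{p^{k}} - 1 = (1 + (x^{p^{k-1}} - 1))^{p} - 1$. Writing $y = x^{p^{k-1}} - 1$ and $(1+y)^{p} - 1 = y^{p} + p\,u(y)$ with $u \in \Z[y]$, $u(y) = y\,u_{1}(y)$, and $u_{1}(0) = 1$, the binomial theorem yields $(x^{p^{k}} - 1)^{\beta} = y^{\beta}\,g_{\beta}(y)$, where
\[
g_{\beta}(y) \;=\; \sum_{i=0}^{\beta} \binom{\beta}{i}\, p^{\beta - i}\, y^{i(p-1)}\, u_{1}(y)^{\beta - i} \;\in\; \Z[y], \qquad g_{\beta}(0) = p^{\beta}.
\]
The point of the hypothesis $\alpha(k) = \alpha(k-1) - 1$ is that a single extra factor of $p$ promotes the coefficient $p^{\alpha(k)}$ attached to the index $k$ to the coefficient $p^{\alpha(k-1)}$ attached to the index $k-1$.

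Now fix a generator $G$ with $m(G) = k > 0$. Isolating the factors at indices $k-1$ and $k$ and collecting the remaining factors into $C'$, we may write $G = C' \cdot p^{a+b}\, y^{\beta(k-1) + \beta(k)}\, g_{\beta(k)}(y)$, where $a = \max(\alpha(k) - v_{p}(\beta(k)), 0)$ and $b = \max(\alpha(k-1) - v_{p}(\beta(k-1)), 0)$, with $b = 0$ if $\beta(k-1) = 0$. Expanding $g_{\beta(k)}$ and setting $M_{l} = \beta(k-1) + \beta(k) + l$, the monomials appearing are $C' p^{a+b} c_{l}\, y^{M_{l}}$ for $0 \le l \le (p-1)\beta(k)$, with $c_{0} = p^{\beta(k)}$; each such monomial is an integer multiple of the generator $C'\, f(\alpha(k-1), M_{l})$ as soon as
\[
v_{p}\bigl(p^{a+b} c_{l}\bigr) \;\ge\; \max\bigl(\alpha(k-1) - v_{p}(M_{l}),\, 0\bigr).
\]
Each generator $C' f(\alpha(k-1), M_{l})$ has exponent $0$ at the index $k$ and agrees with $G$ at every index above $k$; since $k$ was the largest bad index with positive exponent in $G$, this forces $m\bigl(C' f(\alpha(k-1), M_{l})\bigr) \le k - 1$. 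Hence, granting the displayed inequality, $G$ lies in the span of generators with strictly smaller $m$, and the induction goes through.

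It remains to verify $v_{p}(p^{a+b}c_{l}) \ge \max(\alpha(k-1) - v_{p}(M_{l}), 0)$ under $\alpha(k-1) = \alpha(k) + 1$. Since $g_{\beta(k)}$ has $y$-degree $(p-1)\beta(k)$, one treats the cases $l < (p-1)\beta(k)$ and $l = (p-1)\beta(k)$ separately. In the first case, every monomial of $g_{\beta(k)}$ contributing to $c_{l}$ carries the factor $\binom{\beta(k)}{i}\,p^{\beta(k)-i}$ with $0 \le i \le \beta(k) - 1$, so the elementary estimates $v_{p}\binom{m}{i} \ge v_{p}(m) - v_{p}(m - i)$ and $m - v_{p}(m) \ge 1$ for $m \ge 1$ give $v_{p}(c_{l}) \ge v_{p}(\beta(k)) + 1$; combined with $a \ge \alpha(k) - v_{p}(\beta(k))$ this yields $v_{p}(p^{a+b}c_{l}) \ge \alpha(k) + 1 = \alpha(k-1)$, which dominates the right-hand side. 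The single remaining value $l = (p-1)\beta(k)$, where $c_{l} = 1$ and $M_{l} = \beta(k-1) + p\,\beta(k)$, is settled by comparing $v_{p}(M_{l})$ against $v_{p}(\beta(k-1))$ and $v_{p}(\beta(k)) + 1$: in each case at least one of $a$, $b$ already dominates $\max(\alpha(k-1) - v_{p}(M_{l}), 0)$. This $p$-adic bookkeeping, though elementary, is the only nonformal ingredient and is where the main work lies; everything else is a formal consequence of the binomial identity and the well-founded induction on $m$.
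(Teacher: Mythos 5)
Your argument is correct. Note first that the paper itself gives no proof of this lemma: it is imported verbatim as \cite[Proposition~4.10]{Kar}, so there is no in-paper argument to compare against. Your reduction is a sound, self-contained reconstruction: the well-founded induction on the largest bad index with positive exponent is legitimate (folding index $k$ into $k-1$ can only create positive exponents at indices $<k$, and the degree sum only grows, so the new products are again of the form (\ref{gengamma})), and the two $p$-adic estimates check out. For $l<(p-1)\beta(k)$ every contribution to $c_l$ carries $\binom{\beta(k)}{i}p^{\beta(k)-i}$ with $i\le\beta(k)-1$, and $v_p\binom{\beta(k)}{i}\ge v_p(\beta(k))-v_p(\beta(k)-i)$ together with $\beta(k)-i-v_p(\beta(k)-i)\ge 1$ gives $v_p(c_l)\ge v_p(\beta(k))+1$, whence $a+v_p(c_l)\ge\alpha(k)+1=\alpha(k-1)$ — exactly where the hypothesis $\alpha(k)=\alpha(k-1)-1$ is consumed; for $l=(p-1)\beta(k)$ one has $v_p(M_l)\ge\min\{v_p(\beta(k-1)),\,1+v_p(\beta(k))\}$ and either $b$ or $a$ dominates $\max\{\alpha(k-1)-v_p(M_l),0\}$ accordingly (the subcase $\beta(k-1)=0$ giving equality with $a$). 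This is the same circle of ideas that Karpenko's original proof and the paper's own Lemma~\ref{congruentlem} exploit, namely the expansion of $x^{p^k}-1=(1+(x^{p^{k-1}}-1))^p-1$ and the resulting congruence $(y-1)^{p}\equiv y^{p}-1$ modulo $p$; your version just carries the bookkeeping for a general exponent $\beta(k)$ rather than a prime power. The one presentational caution is that ``generated'' in (\ref{gengamma}) should be read additively (as the paper does in the proof of Lemma~\ref{firstlemma}), which is exactly how you use it, since your output is a $\Z$-linear combination of the reduced products.
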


If the index is equal to the exponent of a central simple algebra of $p$-power degree, then by \cite[Proposition 3.3 and Corollary 3.6]{Kar}, the subsequent quotient of the gamma filtration of the corresponding Severi-Brauer variety has no torsion. Therefore, for torsion it is enough to consider the case where the exponent is less than the index of algebra. If the index of a central simple algebra $A$ of $p$-power degree is equal to the exponent of $A$, then the $p$-sequence of $A$ is $\big(\alpha(k)\big)$, where $\alpha(k)=v_{p}(\ind(A))-k$. In other words, $\ind(A)>\exp(A)$ if and only if 
\[ \exists\text{ an integer } m>0 \text{ and } 1\leq k_{1}<\ldots <k_{m}\leq v_{p}(\exp(A))\, :\,  \alpha(k_{i})\leq \alpha(k_{i}-1)-2.\]
Set $k_{0}:=0$. The subsequence $\big(\alpha(k_{i})\big)_{i=0}^{m}:=\big(\alpha(k_{0}), \ldots, \alpha(k_{m})\big)$ of the $p$-sequence $\big(\alpha(k)\big)$ is called the \emph{reduced sequence} of $A$ of \emph{length} $m$. Observe that we have
\begin{equation}\label{alphadifference}
\alpha(k_{i-1})-\alpha(k_{i})> k_{i}-k_{i-1}
\end{equation}
for all $1\leq i\leq m$.

Let $d\geq 2$ be an integer. The reduced sequence $\big(\alpha(k_{i})\big)_{i=0}^{m}$ is said to be the \emph{doubly reduced sequence} (of degree $d$) if the classes $f(\alpha(k_{i}), d)$ generate $\Gamma^{d/d+1}(X)$. In this case, we have a further reduction of the number of generators in the quotient $\Gamma^{d/d+1}(\SB(A))$. A large class of $p$-primary algebras admits such a sequence:

\begin{example}\label{firstexampleintorsiondouble}
$(i)$ Let $A$ be a $p$-primary algebra with $\ind(A)\neq \exp(A)$. If the reduced sequence $\big(\alpha(k_{i})\big)_{i=0}^{m}$ satisfies 
\begin{equation}\label{doublyreduced}
\alpha(k_{m-1})+\alpha(k_{m})+k_{m-1}\geq \alpha(0), 
\end{equation}
then by Lemma \ref{firstlemma} this sequence is doubly reduced for all $d\leq p$. Observe that 
the condition (\ref{doublyreduced}) automatically holds if the length is $1$.

In particular, any $p$-primary algebra $A$ of $\exp(A)=p$ has the doubly reduced sequence $(v_{p}(\ind(A)), 0)$ as this sequence has length $1$. Indeed, the class of $p$-primary algebras having doubly reduced sequences of length $1$ includes more algebras other than the algebras of prime exponent: for instance, any $p$-primary algebra of $\ind(A)=p^{3}$ and $\exp(A)=p^{2}$ has a doubly reduced sequence of length $1$.

\smallskip

$(ii)$ Let $A$ be a central simple algebra whose $3$-sequence is $(4, 2, 0)$. Then, the reduced sequence is also $(4, 2, 0)$, which does not satisfy the condition (\ref{doublyreduced}). However, by the proof of Lemma \ref{firstlemma} the quotient group $\Gamma^{2/3}(\SB(A))$ is generated by $f(4, 2)$, $f(2, 2)$, $f(0, 2)$ and $f(2, 1)\cdot f(0, 1)$. Moreover, by direct computation one sees that the difference $f(2, 1)\cdot f(0, 1)-3f(4, 2)$ is equal to
\begin{equation}\label{exampledifference}
9f(2, 4)+45g_{3}+32g_{4}+38g_{5}+82g_{6}+12g_{7}+3g_{8}+3g_{9},
\end{equation}
where $g_{i}=f(4, i)$ for $3\leq i\leq 9$. As each term of (\ref{exampledifference}) is contained in $\Gamma^{3}(\SB(A))$, we obtain $f(2, 1)\cdot f(0, 1)=3f(4, 2)$ in $\Gamma^{2/3}(\SB(A))$. Hence, the reduced sequence is a doubly reduced sequence of degree $2$. 
\end{example}

% -------------- Lemma 1 ------------------------------------------------------------------------

\begin{lemma}\label{firstlemma}
Let $A$ be a $p$-primary algebra with $\ind(A)>\exp(A)$. If the corresponding reduced sequence $\big(\alpha(k_{i})\big)_{i=0}^{m}$ satisfies $(\ref{doublyreduced})$, then for any $d\leq p$ the classes of $f(\alpha(k_{i}), d)$ generate $\Gamma^{d/d+1}(X)$.
\end{lemma}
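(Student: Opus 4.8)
The plan is to start from the generating set \eqref{gengamma} for $\Gamma^{d}(\SB(A))$, pass to the reduced sequence using Lemma \ref{Karlemma}, and then show that modulo $\Gamma^{d+1}(\SB(A))$ every generator involving two or more distinct reduced exponents can be rewritten in terms of the ``diagonal'' classes $f(\alpha(k_i), d)$. First I would fix $d \le p$ and recall that, by Lemma \ref{Karlemma}, $\Gamma^{d/d+1}(X)$ is generated by the classes $f(\alpha(k), \beta(k))$ with $\sum_k \beta(k) = d$ (terms of higher total degree vanish in the quotient) and with $\beta(k) = 0$ whenever $\alpha(k) = \alpha(k-1) - 1$; so only the reduced exponents $\alpha(k_0), \ldots, \alpha(k_m)$ carry a nonzero $\beta$. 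Thus I must show: for every tuple $(\beta(k_0), \ldots, \beta(k_m))$ of nonnegative integers summing to $d$ with at least two nonzero entries, the product $\prod_i f(\alpha(k_i), \beta(k_i))$ lies in the subgroup generated by the $f(\alpha(k_i), d)$ modulo $\Gamma^{d+1}$.

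The key computational input is the identity, valid in $K_0(\SB(A))$, relating $f(\alpha(k_i), \beta)$ for various $\beta$ to the single-index classes $f(\alpha(k_0), j)$ of higher $\beta$: since $\ind(A^{\otimes p^{k_i}}) \mid \ind(A)$, the element $p^{\alpha(k_i)}(x^{p^{k_i}} - 1)$ is a $\Z$-multiple of $p^{\alpha(0)}(x - 1)$ up to terms of higher filtration, and more precisely $(x^{p^{k_i}} - 1) = (x-1)\cdot u_i$ for a unit $u_i \equiv p^{k_i} \pmod{(x-1)}$. Multiplying out $\prod_i p(\alpha(k_i), \beta(k_i))(x^{p^{k_i}}-1)^{\beta(k_i)}$, the leading term in the $(x-1)$-adic expansion is a $\Z$-multiple of $(x-1)^d$, and I need the coefficient of this leading term to be divisible by $p(\alpha(0), d) = p^{\alpha(0)}/\gcd(d, p^{\alpha(0)})$, so that it equals an integer multiple of $f(\alpha(k_0), d)$ modulo $\Gamma^{d+1}$; the inequalities \eqref{alphadifference} together with the hypothesis \eqref{doublyreduced} are exactly what make the relevant $p$-adic valuations large enough for this divisibility to hold when $d \le p$ (here $d \le p$ guarantees $v_p(d!) $ and the binomial coefficients appearing in $(1 + (x-1)t)^{p^{k_i}}$ do not eat into the estimate). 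This reduces the general tuple to tuples supported on a single index, and a single-index class $f(\alpha(k_i), d)$ with $i \ge 1$ is handled by the same valuation bookkeeping, comparing $p(\alpha(k_i), d)\cdot p^{d\,k_i}$ against $p(\alpha(0), d)$ via \eqref{alphadifference}.

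The reduction to the two-index case is where \eqref{doublyreduced} enters decisively: an induction on the number of nonzero entries lets me assume the tuple is supported on $\{k_i, k_j\}$, and the inequality $\alpha(k_{m-1}) + \alpha(k_m) + k_{m-1} \ge \alpha(0)$ is precisely the ``worst case'' bound ensuring the cross-term $f(\alpha(k_i), \beta)\,f(\alpha(k_j), d-\beta)$ has, after expansion, leading coefficient divisible by $p(\alpha(0), d)$ — the estimate for the pair $(k_{m-1}, k_m)$ being the tightest, and all other pairs following a fortiori from the monotonicity of the reduced sequence and \eqref{alphadifference}. I expect the main obstacle to be organizing these $p$-adic valuation estimates cleanly: one must track the interaction between the $\gcd$ in $p(\alpha(k),\beta)$, the binomial coefficients $\binom{p^{k_i}}{\cdot}$ from expanding $(x^{p^{k_i}}-1)^{\beta}$, and the factor $(x-1)^d$ uniformly over all tuples, and verify that the threshold $d \le p$ is exactly what prevents a loss of a power of $p$ from $v_p(\beta!)$ or from $\gcd(d, p^{\alpha(0)})$ versus $\prod \gcd(\beta(k_i), p^{\alpha(k_i)})$. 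Once the estimates are in hand, the algebra is routine expansion in the $(x-1)$-adic filtration on $\Z[x]/(x-1)^{\deg A}$.
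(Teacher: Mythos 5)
Your overall architecture matches the paper's: reduce the generating set via Lemma \ref{Karlemma}, then show each remaining generator is congruent modulo $\Gamma^{d+1}(X)$ to an integer multiple of $f(\alpha(0),d)$ by $p$-adic valuation estimates, with (\ref{doublyreduced}) and (\ref{alphadifference}) supplying the worst-case bound. But there is a genuine gap at the central step. You verify only that the \emph{leading} coefficient of the $(x-1)$-expansion of a generator is divisible by $p(\alpha(0),d)$ and then declare the rest ``routine expansion in the $(x-1)$-adic filtration.'' That is not enough: $\Gamma^{d+1}(X)$ is not the set of all lattice elements of $(x-1)$-adic order at least $d+1$; it is the subgroup generated by the specific products (\ref{gengamma}) with $\sum\beta(k)\geq d+1$. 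So after subtracting the multiple $e\cdot f(\alpha(0),d)$ determined by the leading coefficient, you must still show that for every $j$ with $d+1\leq j\leq \sum_i k_i\beta(k_i)$ the coefficient $c_j$ of $y^j=(x-1)^j$ in the difference is divisible by $p(\alpha(0),j)=p^{\alpha(0)-v_{p}(j)}$, so that the tail is a $\Z$-combination of the generators $f(\alpha(0),j)\in\Gamma^{j}(X)\subseteq\Gamma^{d+1}(X)$. This tail estimate is where the real work lies: the paper proves $v_{p}(c_j)\geq k_1-v_{p}(j)$ by analyzing partitions in the expansion of the binomial coefficients ${{p^{k_i}}\choose{a}}$, and it is precisely there --- not only in the integrality of the leading multiplier --- that (\ref{doublyreduced}) is invoked, via $\alpha(0)\leq \sum_{i\geq 1}\alpha(k_i)+k_1$. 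Your proposal contains no argument for this step.

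A secondary issue: the reduction ``induction on the number of nonzero entries lets me assume the tuple is supported on $\{k_i,k_j\}$'' is not substantiated. The paper treats all tuples with every $\beta(k_i)\geq 1$ for $i\geq 1$ in a single computation, with no pairwise reduction, and inducts on $m$ only to dispose of tuples in which some $\beta(k_i)$ vanishes; the content of that induction is checking that the subsequence obtained by deleting $k_i$ still satisfies (\ref{doublyreduced}), which follows from (\ref{alphadifference}) much as in your ``a fortiori'' remark. If you insist on merging two factors $f(\alpha(k_{i}),\beta)f(\alpha(k_{j}),\beta')$ into one, you would need that congruence to hold modulo $\Gamma^{\beta+\beta'+1}$ and to survive multiplication by the remaining factors, with the hypothesis (\ref{doublyreduced}) tracked through each merge; none of this is carried out.
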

\begin{proof}
Let $X=\SB(A)$ and 
\begin{equation}\label{lemma1gen}
f_{j}:=\prod_{i=j}^{m} f(\alpha(k_{i}), \beta(k_{i})), \,\,\, \sum_{i=j}^{m}\beta(k_{i})=d
\end{equation}
for $j=0, 1$. Then, by Lemma \ref{Karlemma}, the class of $f_{0}$ generates $\Gamma^{d/d+1}(X)$. We show that any $f_{0}$ is generated by $f(\alpha(k_{i}), d)$.

We first assume that $1\leq \beta(0)\leq d-1$. Then, by the assumption that $d\leq p$, we have $p(\alpha(0), \beta(0))=p^{\alpha(0)}$ and $p(\alpha(0),d)=p^{\alpha(0)}$ (resp. $p^{\alpha(0)-1}$) if $d<p$ (resp. $d=p$). Let $y=x-1$, $g(j)=\sum_{l=1}^{j}{{j}\choose{l}}y^{l}$ for any integer $j\geq 1$, and
\[e_{m}=p(\alpha(0), \beta(0))\cdot p(\alpha(0), d)^{-1}\prod_{i=1}^{m}p(\alpha(k_{i}), \beta(k_{i}))\cdot p^{\sum_{i=1}^{m}k_{i}\beta(k_{i})}.\]
Then, the term $f_{0}-e_{m}\cdot f(\alpha(0), d)$ can be written as
\begin{equation}\label{trivialeq}
p^{\alpha(0)}y^{\beta(0)}\prod_{i=1}^{m}\big(p(\alpha(k_{i}), \beta(k_{i}))\cdot g(p^{k_{i}})^{\beta(k_{i})}-p(\alpha(k_{i}), \beta(k_{i}))\cdot p^{\sum_{i=1}^{m}k_{i}\beta(k_{i})}y^{d-\beta(0)}\big).
\end{equation}
As $\sum_{i=1}^{m}\beta(k_{i})=d-\beta(0)$, there is no term of degree less or equal to $d$ in (\ref{trivialeq}). Moreover, each term of a power of $y$ in (\ref{trivialeq}) is a multiple of $p^{\alpha(0)}$. Therefore, we obtain $f_{0}-e_{m}\cdot f(\alpha(0), d)\in \Gamma^{d+1}(X)$. Thus, we may assume that $\beta(0)=0$ and $f_{0}=f_{1}$.

If $\beta(k_{i})=d$ for some $i$, then $f_{1}=f(\alpha(k_{i}), d)$, so we may assume that $\beta(k_{i})\leq d-1$ for all $1\leq i\leq m$. In particular, the result holds for $m=1$. Now we assume that $m\geq 2$ and the corresponding reduced sequence $\big(\alpha(k_{i})\big)_{i=0}^{m}$ satisfies (\ref{doublyreduced}). We show that any $f_{1}$ in (\ref{lemma1gen}) is a multiple of $f(\alpha(0), d)$ modulo $\Gamma^{d+1}(X)$.

Suppose that $1\leq \beta(k_{i})\leq d-1$ for all $1\leq i\leq m$. Then, it follows from $d\leq p$ that we have $p(\alpha(k_{i}), \beta(k_{i}))=p^{\alpha(k_{i})}$. Therefore, by (\ref{doublyreduced}), we obtain $e_{m}\geq 1$. Consider
\begin{equation}\label{inductionfirst}
f_{1}-e_{m}\cdot f(\alpha(0), d)=p^{\sum_{i=1}^{m}\alpha(k_{i})}\big(\prod_{i=1}^{m}g(p^{k_{i}})^{\beta(k_{i})} -p^{\sum_{i=1}^{m}k_{i}\beta(k_{i})}y^{d}\big)
\end{equation}
and the coefficient
\[c_{j}:=\sum \Big(\prod_{i=1}^{m} {p^{k_{i}}\choose{a_{1}^{\!(i)}}}\cdots {p^{k_{i}}\choose{a_{\beta(k_{i})}^{(i)}}}\Big)\]
of $y^{j}$ in $(f_{1}-e_{m}\cdot f(\alpha(0), d))p^{-\sum_{i=1}^{m}\alpha(k_{i})}$ for $d+1\leq j\leq \sum_{i=1}^{m}k_{i}\beta(k_{i})$, where the sum ranges over all partitions of
$\sum_{i=1}^{m}\sum_{l=1}^{\beta(k_{i})} a_{l}^{\!(i)}=j$ with $1\leq a_{l}^{\!(i)}\leq p^{k_{i}}$. As there exists at least one $a_{l}^{\!(i)}$ such that $v_{p}(a_{l}^{\!(i)})\leq v_{p}(j)$ for each summand of $c_{j}$, we have 
\begin{equation}\label{mequaltwo}
v_{p}(c_{j})\geq k_{1}-v_{p}(j)
\end{equation}
for all $d+1\leq j\leq \sum_{i=1}^{m}k_{i}\beta(k_{i})$. Since $v_{p}(p(\alpha(0), j))=\alpha(0)-v_{p}(j)$, it follows from (\ref{mequaltwo}) and (\ref{doublyreduced}) that $$v_{p}(p(\alpha(0), j))\leq v_{p}(c_{j})+\sum_{i=1}^{m}\alpha(k_{i}),$$ which means that each term of a power of $y$ in (\ref{inductionfirst}) is contained in $\Gamma^{d+1}(X)$. This finishes the proof of the case $1\leq \beta(k_{i})\leq d-1$. In particular, the result holds for $m=2$.

We use induction on $m\geq 2$ to prove that $f_{1}$ is a multiple of $f(\alpha(0), d)$ modulo $\Gamma^{d+1}(X)$. Assume that the result holds for $m-1$. By the argument of the preceding paragraph, it suffices to consider the case where $\beta(k_{i})=0$ for each $1\leq i\leq m$. By the induction hypothesis, the result holds for each case of $\beta(k_{i})=0$, $1\leq i\leq m-2$, provided that the condition (\ref{doublyreduced}). Again by the induction hypothesis, the result holds for $\beta(k_{m-1})=0$ (resp. $\beta(k_{m})=0$) if 
\begin{equation}\label{lastinequallemma}
\alpha(k_{m-2})+\alpha(k_{m})+k_{m-2}\geq \alpha(0) \text{ (resp. } \alpha(k_{m-2})+\alpha(k_{m-1})+k_{m-2}\geq \alpha(0)).
\end{equation}
But by (\ref{alphadifference}) and (\ref{doublyreduced}), the inequalities in (\ref{lastinequallemma}) hold, which finishes the proof.\end{proof}

%       ----------- Lemma 2 ----------- 

We shall need the following lemma, which generalize a proof of \cite[Proposition 4.7]{Kar}.

\begin{lemma}\label{congruentlem}
Let $A$ be a $p$-primary algebra with the corresponding reduced sequence $\big(\alpha(k_{i})\big)_{i=0}^{m}$ and let $y_{i}$ be the corresponding tautological line bundle $x$ to the $p^{k_{i}}$-power. Set $\epsilon(i)=\alpha(k_{i-1})+k_{i-1}-k_{i}$. Then, for any $1\leq i\leq m$ and $d\geq 1$ satisfying $\epsilon(i)-[\log_{p} d]\geq 1$, and for all $f\in \Gamma^{d+1}(\SB(A))$  there exists a polynomial $\phi$ in $y_{i}$ such that
\[f\equiv (y_{i}-1)^{d+1}\phi(y_{i}) \mod p^{\epsilon(i)-[\log_{p} d]}.\]
\end{lemma}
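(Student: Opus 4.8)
The plan is to work with the explicit generators of $\Gamma^{d+1}(\SB(A))$ described in (\ref{gengamma}), expressed in the variable $y_i$ after a change of tautological line bundle. Fix $i$ and set $z = y_i - 1$, so that the class $x^{p^{k_i}} - 1$ is identified with $z$ and, for $k \neq k_i$, the class $x^{p^k}-1$ becomes $(1+z)^{p^{k-k_i}} - 1$ when $k \geq k_i$, or is obtained by inverting $1+z$ modulo $(1+z)^{\deg A}$ when $k < k_i$. The point is that it suffices to prove the congruence for a single generator
\[
g = \prod_{k=0}^{\exp(A)} p(\alpha(k),\beta(k))\,(x^{p^k}-1)^{\beta(k)}, \qquad \sum_k \beta(k) \geq d+1,
\]
since $\Gamma^{d+1}(\SB(A))$ is generated by such elements and the conclusion (existence of a polynomial $\phi$ with $g \equiv z^{d+1}\phi(y_i) \bmod p^{\epsilon(i)-[\log_p d]}$) is stable under $\Z$-linear combinations: one just adds up the corresponding $\phi$'s.

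The key step is a valuation estimate. Separate the factor $k = k_i$ from the rest: write $g = p(\alpha(k_i),\beta(k_i))\,z^{\beta(k_i)} \cdot h$, where $h = \prod_{k\neq k_i} p(\alpha(k),\beta(k))\,(x^{p^k}-1)^{\beta(k)}$. For each $k \neq k_i$ the class $x^{p^k}-1$ lies in $\Gamma^1$ and, expanded in $z$, is a power series in $z$ with zero constant term; more precisely, if $k > k_i$ it is divisible by $z$, while if $k < k_i$ a single factor $x^{p^k}-1 = (1+z)^{-p^{k_i-k}}-1$ contributes a leading term $-p^{k_i - k} z + \cdots$, i.e. is divisible by $z$ once we clear the unit $(1+z)^{-p^{k_i-k}}$. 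In all cases $h$ is $z$ times a polynomial in $y_i$. Hence $g$ is automatically divisible by $z^{\beta(k_i)+1}$ (at least), and more generally by $z^{\sum_k \beta(k)} \geq z^{d+1}$ up to the units just described — but those units must be cleared, and clearing $(1+z)^{-p^{k_i-k}}$ modulo $(1+z)^{\deg A}$ is harmless for divisibility by powers of $z$. So $g = z^{d+1}\psi(y_i) + (\text{terms of }z\text{-degree} \leq d)$, and it remains to kill the low-degree terms modulo $p^{\epsilon(i)-[\log_p d]}$.

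To handle the low-degree part, I compute the $p$-adic valuation of the coefficient of $z^j$ in $g$ for $0 \leq j \leq d$. Each such coefficient is $p(\alpha(k_i),\beta(k_i))$ times a sum of products of binomial-type coefficients coming from the expansions of $(x^{p^k}-1)^{\beta(k)}$; by the standard argument (as in the proof of Lemma \ref{firstlemma}, where one notes that in each monomial of $z$-degree $j$ at least one binomial argument $a$ has $v_p(a) \leq v_p(j) \leq [\log_p j] \leq [\log_p d]$), together with the inequality (\ref{alphadifference}) controlling $\alpha(k_{i-1}) - \alpha(k_i)$, each such coefficient has $p$-adic valuation at least $\alpha(k_{i-1}) + k_{i-1} - k_i - [\log_p d] = \epsilon(i) - [\log_p d]$. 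The hypothesis $\epsilon(i)-[\log_p d] \geq 1$ guarantees this exponent is a genuine positive integer, so those low-degree terms vanish modulo $p^{\epsilon(i)-[\log_p d]}$ and $g \equiv z^{d+1}\psi(y_i) \bmod p^{\epsilon(i)-[\log_p d]}$, as required.

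The main obstacle I expect is bookkeeping the contributions of the factors with $k < k_i$: inverting $1+z$ modulo $(1+z)^{\deg A}$ introduces higher-degree corrections, and one must check both that these do not spoil the divisibility by $z^{d+1}$ in the high-degree part and that they do not lower the valuation estimate in the low-degree part. The cleanest way is to observe that $(x^{p^k}-1)$ for $k < k_i$ equals $-(x^{p^k})\big((x^{p^{k_i}})^{p^{-(k_i-k)}} - 1\big)\cdot(\text{unit})$ — or, more safely, to avoid negative powers entirely by first reducing to generators in which every $\beta(k)$ with $k < k_i$ is zero, using Lemma \ref{Karlemma} and the relations among the $f(\alpha(k),\beta(k))$ exactly as in Lemma \ref{firstlemma}; then $h$ is literally a polynomial in $y_i$ with no inversion needed, and both estimates are immediate. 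I would adopt this reduction at the start of the proof.
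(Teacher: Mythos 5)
There is a genuine gap, and it sits exactly where you flagged your ``main obstacle'': the factors $(x^{p^{k_j}}-1)^{\beta(k_j)}$ with $k_j<k_i$. For such $j$ the element $x^{p^{k_j}}$ is a $p^{k_i-k_j}$-th \emph{root} of $y_i=x^{p^{k_i}}$, not a negative integer power of it, so the identity $x^{p^{k_j}}-1=(1+z)^{-p^{k_i-k_j}}-1$ is simply false, and these factors are not polynomials (or Laurent polynomials) in $y_i$ over $\Z$ at all; ``expanding $g$ in $z=y_i-1$'' and speaking of ``the coefficient of $z^j$'' is therefore not defined for them. Your fallback --- discard all generators with $\beta(k_j)\neq 0$ for $k_j<k_i$ using Lemma \ref{Karlemma} and the relations of Lemma \ref{firstlemma} --- does not close the gap: Lemma \ref{Karlemma} only suppresses the indices with $\alpha(k)=\alpha(k-1)-1$ and leaves, e.g., the factor $(x-1)^{\beta(0)}$ genuinely present in the generators of $\Gamma^{d+1}$; and the relations of Lemma \ref{firstlemma} hold only modulo $\Gamma^{d+1}$, only for $d\leq p$, and only under the hypothesis (\ref{doublyreduced}), none of which is assumed in Lemma \ref{congruentlem} --- besides which, modifying a generator of $\Gamma^{d+1}$ by another element of $\Gamma^{d+1}$ is circular when the statement to be proved is a congruence valid for \emph{every} element of $\Gamma^{d+1}$.

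The missing idea is the dichotomy the paper runs on the coefficients of the low factors. Write a generator as $f_i\cdot\prod_{j<i}p(\alpha(k_j),\beta(k_j))(y_j-1)^{\beta(k_j)}$ with $f_i$ the part involving only $y_j$ for $j\geq i$ (which is honestly divisible by $(y_i-1)^{\sum_{j\geq i}\beta(k_j)}$, as you correctly note). If some $v_p\bigl(p(\alpha(k_j),\beta(k_j))\bigr)\geq\epsilon(i)-[\log_p d]$ the generator is already $\equiv 0$ and there is nothing to prove. Otherwise, since $v_p(\beta(k_j))+v_p\bigl(p(\alpha(k_j),\beta(k_j))\bigr)=\alpha(k_j)$ and $\alpha(k_j)+k_j\geq\alpha(k_{i-1})+k_{i-1}$ by (\ref{alphadifference}), one gets $v_p(\beta(k_j))\geq k_i-k_j+[\log_p d]+1$; the congruence $(y-1)^{p^r}\equiv(y^p-1)^{p^{r-1}}\bmod p^r$ of \cite[Lemma 4.8]{Kar}, iterated $k_i-k_j$ times, then converts $(y_j-1)^{\beta(k_j)}$ into $(y_i-1)^{\beta(k_j)p^{k_j-k_i}}$ modulo a power of $p$ which, combined with the coefficient, exceeds $p^{\epsilon(i)}$, and the resulting exponent $\beta(k_j)p^{k_j-k_i}\geq p^{[\log_p d]+1}>d$ already supplies the required divisibility by $(y_i-1)^{d+1}$. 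Without this valuation-forced $p$-divisibility of $\beta(k_j)$ and the root-to-power congruence, the low factors cannot be brought into the ring $\Z[y_i]$, and no estimate on ``low-degree coefficients'' can substitute for it.
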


\begin{proof}
Let $\epsilon(i)=\alpha(k_{i-1})+k_{i-1}-k_{i}$ and $y_{i}=x^{p^{k_{i}}}$ for $1\leq i\leq m$. Fix $i$ and a positive integer $d$ such that $\epsilon(i)-[\log_{p} d]\geq 1$. Set
\[f_{i}=\prod_{j=i}^{m} p(\alpha(k_{j}), \beta(k_{j}))(y_{i}^{p^{k_{j}-k_{i}}}-1)^{\beta(k_{j})}.\]
Then, a generator $f_{0}$ of $\Gamma^{d+1}(X)$ can be written as
\[f_{0}=f_{i}\cdot \prod_{j=0}^{i-1} p(\alpha(k_{j}), \beta(k_{j}))(y_{j}-1)^{\beta(k_{j})}, \text{ where } \sum_{j=0}^{m}\beta(k_{j})\geq d+1 \text{ and }y_{0}=x.\]
If $\sum_{j=0}^{i-1} \beta(k_{j})=0$, then $f_{0}$ is a multiple of $f_{i}$ and $f_{i}$ is a multiple of $(y_{i}-1)^{d+1}$, thus we have $f_{0}=(y_{i}-1)^{d+1}\phi(y_{i})$ for some polynomial $\phi$ in $y_{i}$. Therefore, we may assume that $\beta(k_{j})\neq 0$ for all $0\leq j\leq i-1$ since we can exclude the term of $\beta(k_{j})=0$ in $f_{0}$ and rearrange the index. Moreover, if one of the $v_{p}(p(\alpha(k_{j}), \beta(k_{j})))$, $j=0,\ldots, i-1$ is bigger or equal to $\epsilon(i)-v_{p}(d)$, then we are done. Therefore, we may assume that
\begin{equation}\label{conditioninlem}
\epsilon(i)-[\log_{p} d]>v_{p}(p(\alpha(k_{j}), \beta(k_{j})))
\end{equation}
for all $j$.

By (\ref{alphadifference}), we have $\alpha(k_{j})+k_{j}\geq \alpha(k_{i-1})+k_{i-1}$ for all $0\leq j\leq i-1$. Hence, it follows from $v_{p}(\beta(k_{j}))+v_{p}(p(\alpha(k_{j}),\beta(k_{j})))=\alpha(k_{j})$ that
\begin{equation}\label{condonelem}
v_{p}(\beta(k_{j}))-k_{i}+k_{j}+v_{p}(p(\alpha(k_{j}), \beta(k_{j})))\geq \epsilon(i)
\end{equation}
for all $0\leq j\leq i-1$. The inequality (\ref{conditioninlem}) implies that
\begin{equation}\label{condtwolem}
v_{p}(\beta(k_{j}))-k_{i}+k_{j}\geq [\log_{p} d]+1.
\end{equation}

Observe that, by \cite[Lemma 4.8]{Kar}, we have 
\[(y_{j}-1)^{p^{r}}\equiv (y_{j}^{p}-1)^{p^{r-1}}\]
modulo $p^{r}$ for any $r\geq 1$.
By applying this formula, for any $0\leq j\leq i-1$ we obtain
\[p(\alpha(k_{j}), \beta(k_{j}))(y_{j}-1)^{\beta(k_{j})}\equiv p(\alpha(k_{j}), \beta(k_{j}))(y_{i}-1)^{\beta(k_{j})\cdot p^{k_{j}-k_{i}}}\]
modulo $p$ to the ${v_{p}(\beta(k_{j}))-k_{i}+k_{j}+v_{p}(p(\alpha(k_{j}), \beta(k_{j})))}+1$-power. Since we have $[\log_{p} d]+1\geq \log_{p}(d+1)$, the result follows from (\ref{condonelem}) and (\ref{condtwolem}).
\end{proof}

We are ready to prove the main result of this section.

%--------------------------               Main Theorem         -----------------------------------------------

\begin{theorem}\label{mainprop}
Let $A$ be a $p$-primary algebra with $\exp(A)< \ind(A)$, $\big(\alpha(k_{i})\big)_{i=0}^{m}$ the corresponding reduced sequence, $\epsilon(i)=\alpha(k_{i-1})+k_{i-1}-k_{i}$, and $X=\SB(A)$. Then, for any $i\geq 1$ and $1< d< p^{\epsilon(i)}$, the torsion subgroup $\Gamma^{d/d+1}(X)_{\tors}$ contains at least $p^{\lambda(i, d)}$ elements, where
\[\lambda(i, d)= \epsilon(i)+v_{p}(d)-[\log_{p}{d}]-\!
\begin{cases}
(\alpha(0)-dk_{i}) &\!\!\! \text{ if } \alpha(0)\geq dk_{i}+\!\max\{\alpha(k_{i}), v_{p}(d)\},\\
\max\{\alpha(k_{i}), v_{p}(d)\} &\!\!\! \text{ otherwise.}
\end{cases}
\]
In particular, $|\Gamma^{d/d+1}(X)_{\tors}|\geq \max\{p^{\lambda(i, d)}\,|\, 1\leq i\leq m,\,\, 1< d< p^{\epsilon(i)}\}$.
\end{theorem}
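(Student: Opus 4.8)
The plan is to produce, for each pair $(i,d)$ with $1<d<p^{\epsilon(i)}$, a single torsion class in $\Gamma^{d/d+1}(X)$ of order at least $p^{\lambda(i,d)}$; the ``in particular'' is then immediate by maximising over $i$. The first ingredient is a structural reduction. By Quillen the restriction map $\rho\colon K_{0}(X)\to K_{0}(\P^{\deg(A)-1}_{E})\simeq R:=\Z[x]/(x-1)^{\deg(A)}$ of $(\ref{Quillenres})$ is injective, and being a ring homomorphism it maps $\Gamma^{d}(X)$ injectively into $\Gamma^{d}(\P^{\deg(A)-1}_{E})=(x-1)^{d}R$, with finite-index image (it contains the triangular family $\binom{\ind A}{d}\ind(A^{\tens j})(x-1)^{d}x^{j}$, $0\le j\le\deg(A)-1-d$). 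Hence each $\Gamma^{d}(X)$ is torsion-free, $\Gamma^{d/d+1}(X)$ has rank one, and
\[\Gamma^{d/d+1}(X)_{\tors}=\Ker\bigl(\Gamma^{d/d+1}(X)\longrightarrow\Gamma^{d/d+1}(\P^{\deg(A)-1}_{E})\simeq\Z\bigr),\]
a finite $p$-group because $A$ is $p$-primary. So it suffices to exhibit $z=z_{i,d}\in\Gamma^{d}(X)$ with $\rho(z)$ divisible by $(x-1)^{d+1}$ — equivalently, with torsion class — such that $p^{\lambda(i,d)-1}z\notin\Gamma^{d+1}(X)$.

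For $z$ I would combine the two degree-$d$ generators from $(\ref{gengamma})$ attached to $A$ and $A^{\tens p^{k_{i}}}$, namely $f(\alpha(0),d)=p(\alpha(0),d)(x-1)^{d}$ and $f(\alpha(k_{i}),d)=p(\alpha(k_{i}),d)(x^{p^{k_{i}}}-1)^{d}$, both lying in $\Gamma^{d}(X)$. Writing $x^{p^{k_{i}}}-1=(x-1)\psi(x)$ with $\psi(x)=\sum_{j\ge0}\binom{p^{k_{i}}}{j+1}(x-1)^{j}$ (so $\psi(1)=p^{k_{i}}$ and $v_{p}$ of the $(x-1)^{j}$-coefficient is $k_{i}-v_{p}(j+1)$ by Kummer), the $\rho$-images have $(x-1)^{d}$-coefficients $p(\alpha(0),d)$ and $p(\alpha(k_{i}),d)\,p^{dk_{i}}$. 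Choosing coprime $u,v$ with $u\,p(\alpha(0),d)+v\,p(\alpha(k_{i}),d)\,p^{dk_{i}}=0$ and setting $z=u\,f(\alpha(0),d)+v\,f(\alpha(k_{i}),d)$ gives
\[\rho(z)=v\,p(\alpha(k_{i}),d)\,(x-1)^{d}\bigl(\psi(x)^{d}-p^{dk_{i}}\bigr),\]
which is divisible by $(x-1)^{d+1}$, so the class $\bar z$ is torsion; here $v_{p}(v)=\max\{0,\ v_{p}(p(\alpha(0),d))-v_{p}(p(\alpha(k_{i}),d))-dk_{i}\}$, and a short check using the identity $\max\{v_{p}(p(\alpha(0),d))-dk_{i},\ v_{p}(p(\alpha(k_{i}),d))\}=v_{p}(v)+v_{p}(p(\alpha(k_{i}),d))$ shows that the two cases of the theorem amount to the single formula $\lambda(i,d)=\epsilon(i)-[\log_{p}d]-v_{p}(v)-v_{p}(p(\alpha(k_{i}),d))$.

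It remains to bound the order of $\bar z$, which is where Lemma $\ref{congruentlem}$ enters. Suppose $p^{\lambda-1}z\in\Gamma^{d+1}(X)$ with $\lambda=\lambda(i,d)$. Since $1<d<p^{\epsilon(i)}$ we have $\epsilon(i)-[\log_{p}d]\ge1$, so the lemma (applied with this $i$, $d$) gives a polynomial $\phi$ with
\[p^{\lambda-1}\rho(z)\equiv(x^{p^{k_{i}}}-1)^{d+1}\phi(x^{p^{k_{i}}})=(x-1)^{d+1}\psi(x)^{d+1}\phi(x^{p^{k_{i}}})\pmod{p^{\epsilon(i)-[\log_{p}d]}}.\]
As $d$ is in the stated range one has $(d+1)p^{k_{i}}\le\deg(A)$, so the common factor $(x-1)^{d+1}$ can be cancelled, and the two sides are then compared through their $(x-1)$-adic expansions; concretely one reads off the coefficient of $(x-1)^{d(p^{k_{i}}-1)-1}$. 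On the left this coefficient is $\pm\,v\,p(\alpha(k_{i}),d)$ times the leading $(x-1)$-adic coefficient of $\psi(x)^{d}$, hence of $p$-adic valuation exactly $v_{p}(v)+v_{p}(p(\alpha(k_{i}),d))$. On the right, writing $\psi=\prod_{j=1}^{k_{i}}\Phi_{p^{j}}$ (cyclotomic polynomials, each Eisenstein in the variable $x-1$) and using the corresponding $(x-1)$-adic Newton polygon of $\psi^{d+1}$ together with Kummer- and Kazandzidis-type congruences for the binomial coefficients $\binom{np^{k_{i}}}{mp^{k_{i}}}$ occurring in $\psi(x)^{d+1}\phi(x^{p^{k_{i}}})$, one bounds the $p$-adic valuation of the corresponding coefficient from below; feeding this bound, together with the modulus $p^{\epsilon(i)-[\log_{p}d]}$, into the congruence forces $\lambda>\lambda(i,d)$ — a contradiction. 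Hence $p^{\lambda(i,d)-1}z\notin\Gamma^{d+1}(X)$, so $\bar z$ has order at least $p^{\lambda(i,d)}$, which gives $|\Gamma^{d/d+1}(X)_{\tors}|\ge p^{\lambda(i,d)}$ and, taking the maximum over $i$, the last assertion.

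The routine parts are the reduction in the first paragraph, the construction of $z$, and the valuation bookkeeping for $v_{p}(v)$ and $\lambda(i,d)$. The real obstacle is the last step: squeezing out of Lemma $\ref{congruentlem}$ exactly the exponent $\lambda(i,d)$, with its $v_{p}(d)$ and $[\log_{p}d]$ corrections and its case split intact, requires a sharp $p$-adic estimate for the $(x-1)$-adic coefficients of $\psi(x)^{d+1}\phi(x^{p^{k_{i}}})$ — equivalently, an accurate description, in the $(x-1)$-adic topology, of the ideal $\psi(x)^{d+1}\,\Z[x^{p^{k_{i}}}]+p^{\epsilon(i)-[\log_{p}d]}\Z[x]$ — and a careful choice of which coefficient(s) of the congruence to exploit, since the naive choice (evaluation at $x=1$) loses too much.
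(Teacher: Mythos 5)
Your construction is the right one: up to normalization your $z$ is exactly the element $t_{i}$ (resp.\ $t'_{i}$) that the paper uses, your identity $\lambda(i,d)=\epsilon(i)-[\log_{p}d]-v_{p}(v)-v_{p}(p(\alpha(k_{i}),d))$ correctly unifies the two cases of the statement, and invoking Lemma \ref{congruentlem} to exclude $p^{\lambda(i,d)-1}z\in\Gamma^{d+1}(X)$ is also the paper's move. Your argument that $\bar z$ is torsion (via the rank-one reduction and the divisibility of $\rho(z)$ by $(x-1)^{d+1}$) is more abstract than the paper's, which instead proves the explicit containment $p^{(d-1)k_{i}}t_{i}\in\Gamma^{d+1}(X)$ by a valuation estimate on the coefficients of $(x^{p^{k_{i}}}-1)^{d}$; that explicit annihilator is reused later (Proposition \ref{propsecond}), but for the present statement your shortcut is legitimate.

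The genuine gap is the final step, which you flag but do not close: no contradiction is actually derived from $p^{\lambda(i,d)-1}z\in\Gamma^{d+1}(X)$. Your plan --- cancel $(x-1)^{d+1}$ and read off the coefficient of $(x-1)^{d(p^{k_{i}}-1)-1}$ --- founders on the fact that the right-hand side contains an \emph{unknown} polynomial $\phi$, and the pairing of low-order terms of $\psi(x)^{d+1}$ with high-order terms of $\phi(x^{p^{k_{i}}})$ gives that coefficient no a priori lower bound on its $p$-adic valuation; the ``Newton polygon / Kazandzidis'' step is not bookkeeping but the entire difficulty, and there is no indication it can be made to work in these coordinates. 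The missing idea is to never expand in $x-1$ at all: write $t_{i}=p^{\alpha(0)-v_{p}(d)}(x-1)^{d}-p^{\alpha(0)-dk_{i}-v_{p}(d)}(y_{i}-1)^{d}$ with $y_{i}=x^{p^{k_{i}}}$ and keep the second term as a polynomial in $y_{i}$. After multiplying by $p^{\lambda(i,d)-1}$, the first term has valuation $\epsilon(i)+dk_{i}-[\log_{p}d]-1\geq\epsilon(i)-[\log_{p}d]$ and so dies modulo the modulus of Lemma \ref{congruentlem}; what survives is the clean identity
\[-p^{\epsilon(i)-[\log_{p}d]-1}(y_{i}-1)^{d}=(y_{i}-1)^{d+1}\phi(y_{i})+p^{\epsilon(i)-[\log_{p}d]}\psi(y_{i}),\]
and comparing coefficients of $(y_{i}-1)^{d}$ in the $(y_{i}-1)$-adic expansion (the first right-hand term contributes nothing in degree $\leq d$) yields $-p^{\epsilon(i)-[\log_{p}d]-1}=p^{\epsilon(i)-[\log_{p}d]}\cdot(\text{integer})$, which is absurd. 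Working $(y_{i}-1)$-adically rather than $(x-1)$-adically turns your hard estimate into a one-line coefficient comparison; without that, the proof is incomplete.
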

\begin{proof}
Let $A$ be a central simple algebra of $p$-power degree such that $\exp(A)<\ind(A)$. Then, $A$ has the reduced sequence $\big(\alpha(k_{i})\big)_{i=0}^{m}$ for some $m\geq 1$. Consider the elements $f(\alpha(0), d), \ldots, f(\alpha(k_{m}), d)$ of $\Gamma^{d}(X)$, where $X=\SB(A)$. Set $$e_{i}=p(\alpha(0), d)\cdot p(\alpha(k_{i}), d)^{-1}\cdot p^{-dk_{i}}$$ for $1\leq i\leq m$. Then, we have $e_{i}\geq 1$ if 
\begin{equation}\label{eipositive}
\alpha(0)+\min\{ \alpha(k_{i}), v_{p}(d)\}\geq v_{p}(d)+\alpha(k_{i})+dk_{i}
\end{equation}
and $e_{i}^{-1}\geq 1$ if
\begin{equation}\label{eiinvpositive}
v_{p}(d)+\alpha(k_{i})+dk_{i}\geq \alpha(0)+\min\{ \alpha(k_{i}), v_{p}(d)\}.
\end{equation}
Consider the following elements of $\Gamma^{d}(X)$
\begin{equation*}
t_{i}=f(\alpha(0), d)-e_{i}\cdot f(\alpha(k_{i}), d)\,\,\,\, \text{ for } (\ref{eipositive})
\end{equation*}
and $t'_{i}=e_{i}^{-1}\cdot t_{i}$ for (\ref{eiinvpositive}). Then, by direct computation, one has
\begin{align}
t_{i}&=p^{\alpha(0)-v_{p}(d)}y^{d}-p^{\alpha(0)-dk_{i}-v_{p}(d)}(y_{i}-1)^{d} \,\,\, \text{ and }\label{tjeq} \\ 
t'_{i}&=p^{\alpha(k_{i})+dk_{i}-\min\{\alpha(k_{i}), v_{p}(d)\}}y^{d}-p^{\alpha(k_{i})-\min\{\alpha(k_{i}), v_{p}(d)\}}(y_{i}-1)^{d} \label{tprimejeq} 
\end{align} 
where $y=x-1$ and $y_{i}=x^{p^{k_{i}}}$.

Let $g_{i}(y)=\big({{p^{k_{i}}}\choose{1}}y^{1}+{{p^{k_{i}}}\choose{2}}y^{2}+\cdots +{{p^{k_{i}}}\choose{p^{k_{i}}}}y^{p^{k_{i}}}\big)^{d}$. Consider the coefficient
\[c_{j}:=\sum {p^{k_{i}}\choose{a_{1}}}\cdots {p^{k_{i}}\choose{a_{d}}}\]
of $y^{j}$ in $g_{i}(y)$ for $d+1\leq j\leq dp^{k_{i}}$, where the sum ranges over all partitions of
$\sum_{l=1}^{d}a_{l}=j$ with $1\leq a_{l}\leq p^{k_{i}}$. We claim that 
\begin{equation}\label{covadic2}
k_{i}-v_{p}(j)+v_{p}(d)\leq v_{p}(c_{j}).
\end{equation}
Let $c'_{j}$ be the summand of $c_{j}$ such that the sum ranges over all partitions of $a_{1}=\cdots=a_{d}$. Then, for each partition $\sum_{l=1}^{d}a_{l}=j$ of $c'_{j}$ we have $v_{p}(j)=v_{p}(d)+v_{p}(a_{l})$ for all $1\leq l\leq p$. Hence, $k_{i}-v_{p}(j)+v_{p}(d)=k_{i}-v_{p}(a_{l})\leq v_{p}(c'_{j})$. Set $c''_{j}=c_{j}-c'_{j}$. For each partition $\sum_{l=1}^{d}a_{l}=j$ of $c''_{j}$ such that $a_{1}\geq\cdots \geq a_{d}$ there are a multiple of $p^{v_{p}(d)}$ permutations of this partition. Moreover, the collection of such permutations form all partitions of $c''_{j}$. Hence, for each partition $\sum_{l=1}^{d}a_{l}=j$ of $c''_{j}$ there exists at least one $a_{l}$ in the partition such that $v_{p}(j)\geq v_{p}(a_{l})$ and this appears at least a multiple of $p^{v_{p}(d)}$ times in $c''_{j}$. Therefore, $(k_{i}-v_{p}(a_{l}))+v_{p}(d)\leq v_{p}(c''_{j})$. As $\min\{v_{p}(c'_{j}), v_{p}(c''_{j})\}\leq v_{p}(c_{j})$, we obtain (\ref{covadic2}).

Let $M_{i}=p^{(d-1)k_{i}}$ and $M'_{i}=p^{\alpha(0)-\max\{\alpha(k_{i}), v_{p}(d)\}-k_{i}}$ for $1\leq i\leq m$. Then, we have $M_{i}t_{i}=M'_{i}t'_{i}$. To show $M_{i}t_{i}\in \Gamma^{d+1}(X)$, it is enough to verify that each coefficient of $y^{j}$ in $M_{i}t_{i}$ is a multiple of $p(\alpha(0), j)$ for $d+1\leq j\leq dp^{k_{i}}$. It follows from (\ref{tjeq}) that the coefficient of $y^{j}$ in $M_{i}t_{i}$ is $p^{\alpha(0)-k_{i}-v_{p}(d)}c_{j}$. Hence, by (\ref{covadic2}), we have 
\begin{equation*}
v_{p}(p(\alpha(0),j))\leq \alpha(0)-k_{i}-v_{p}(d)+v_{p}(c_j),
\end{equation*}
which implies that 
\begin{equation}\label{mnannihilators}
M_{i}t_{i}=M'_{i}t'_{i}\in \Gamma^{d+1}(X).
\end{equation}

Let $\epsilon(i)=\alpha(k_{i-1})+k_{i-1}-k_{i}$. Assume that an integer $1<d<p^{\epsilon(i)}$ satisfies 
\begin{equation}\label{ticonditioninthm}
\alpha(k_{i})-\min\{\alpha(k_{i}), v_{p}(d)\}\leq \alpha(0)-dk_{i}-v_{p}(d)< \epsilon(i)-[\log_{p}{d}]
\end{equation}
for some $i$. Let $\lambda(i, d)=\epsilon(i)+v_{p}(d)+dk_{i}-[\log_{p}(d)]-\alpha(0)$. Then, we have $0<\lambda(i, d)\leq M_{i}$. We show that the element $p^{\lambda(i,d)-1}t_{i}$ is not contained in $\Gamma^{d+1}(X)$. Assume the contrary. Then, it follows from Lemma \ref{congruentlem} and (\ref{tjeq}) that
\begin{equation}\label{tcontainedingammad}
-p^{\epsilon(i)-[\log_{p}{d}]-1}(y_{i}-1)^{d}=(y_{i}-1)^{d+1}\phi(y_{i})+p^{\epsilon(i)-[\log_{p}{d}]}\psi(y_{i})
\end{equation}
for some polynomial $\psi(y_{i})$. Thus, the equation (\ref{tcontainedingammad}) becomes $1=(y_{i}-1)\rho(y_{i})+p\mu(y_{i})$ for some polynomials $\rho(y_{i})$ and $\mu(y_{i})$, which gives a contradiction. Hence, we obtain $p^{\lambda(i, d)-1}t_{i}\notin \Gamma^{d+1}(X)$. Therefore, by (\ref{mnannihilators}) the class of $t_{i}$ gives a torsion element of order  greater than or equal to $p^{\lambda(i, d)}$.

Now, we assume that an integer $d$ satisfies $1< d< p^{\epsilon(i)}$ and 
\begin{equation}\label{tiprimeconditioninthm}
\alpha(0)-dk_{i}-v_{p}(d)\leq \alpha(k_{i})-\min\{\alpha(k_{i}), v_{p}(d)\}< \epsilon(i)-[\log_{p}{d}]
\end{equation}
for some $i$. Let $\lambda'(i, d)=\epsilon(i)+v_{p}(d)-\max\{\alpha(k_{i}), v_{p}(d)\}-[\log_{p}{d}]$. Then, we have $0<\lambda'(i, d)\leq M'_{i}$. If $p^{\lambda'(i, d)-1}t'_{i}\in \Gamma^{d+1}(X)$, then, again by Lemma \ref{congruentlem} and (\ref{tprimejeq}) we obtain the same equation (\ref{tcontainedingammad}). By the same argument, we have $p^{\lambda'(i, d)-1}t'_{i}\notin \Gamma^{d+1}(X)$. Hence, by (\ref{mnannihilators}) the class of $t'_{i}$ gives a torsion element of order bigger than or equal to $p^{\lambda'(i, d)}$.\end{proof}

We apply Theorem \ref{mainprop} to particular cases:

%--------------------------               Corollaries       -----------------------------------------------

\begin{corollary}\label{mainpropcor}
Let $A$ be a $p$-primary algebra with $\exp(A)< \ind(A)$ and $\big(\alpha(k_{i})\big)_{i=0}^{m}$ the corresponding reduced sequence. Then, for any $1< d <p^{\epsilon(1)}$ of the form $jp^{n}$ with $1\leq j< p$ and $0\leq n< \epsilon(1)$, the torsion subgroup $\Gamma^{d/d+1}(\SB(A))_{\tors}$ is nontrivial and
$$|\Gamma^{d/d+1}(\SB(A))_{\tors}|\geq \max\{ p^{\lambda(i, d)}\,|\, 1\leq i\leq m, \,\, 1<d=jp^{n}<p^{\epsilon(i)}\},$$
where $\lambda(i, d)=\min\{\epsilon(i)+dk_{i}-\alpha(0),\,\, \epsilon(i)-\max\{\alpha(k_{i}), n\}\}$. In particular, if $A$ has the doubly reduced sequence of length one, then for any such $d$, the group $\Gamma^{d/d+1}(\SB(A))_{\tors}$ is cyclic of order $p^{\lambda(1, d)}$.\end{corollary}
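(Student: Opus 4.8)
The plan is to read the corollary off Theorem \ref{mainprop} by specializing to exponents $d$ of the form $jp^{n}$ with $1\le j<p$. For such $d$ one has $v_{p}(d)=[\log_{p}d]=n$, so the summand $v_{p}(d)-[\log_{p}d]$ in Theorem \ref{mainprop} vanishes and the two cases there combine to give
\[
\lambda(i,d)=\min\{\,\epsilon(i)+dk_{i}-\alpha(0),\ \epsilon(i)-\max\{\alpha(k_{i}),n\}\,\},
\]
because $\alpha(0)\ge dk_{i}+\max\{\alpha(k_{i}),n\}$ is exactly the inequality that makes the first entry the minimum. The displayed lower bound for $|\Gamma^{d/d+1}(\SB(A))_{\tors}|$ is then the last assertion of Theorem \ref{mainprop}, with $i$ ranging over those $1\le i\le m$ for which $d<p^{\epsilon(i)}$.

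For nontriviality I would use $i=1$, where $\epsilon(1)=\alpha(0)-k_{1}$, so $\lambda(1,d)=\min\{(d-1)k_{1},\ (\alpha(0)-k_{1})-\max\{\alpha(k_{1}),n\}\}$. Both entries are $\ge1$: the first since $d\ge2$ and $k_{1}\ge1$; the second since $(\ref{alphadifference})$ gives $\alpha(0)-\alpha(k_{1})>k_{1}$, hence $\alpha(0)-k_{1}-\alpha(k_{1})\ge1$, while $d<p^{\epsilon(1)}$ forces $n<\epsilon(1)=\alpha(0)-k_{1}$, hence $\alpha(0)-k_{1}-n\ge1$. Thus $\lambda(1,d)\ge1$ and $\Gamma^{d/d+1}(\SB(A))_{\tors}$ is nontrivial.

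For the final clause, assume $m=1$ and that $(\alpha(0),\alpha(k_{1}))$ is doubly reduced; set $X=\SB(A)$, $u=f(\alpha(0),d)$, $v=f(\alpha(k_{1}),d)$, so the classes $\bar u,\bar v$ generate $\Gamma^{d/d+1}(X)$. Over a splitting field $E$ we have $X_{E}\cong\P^{\deg(A)-1}_{E}$, and $\Gamma^{d/d+1}(X_{E})\cong\Z$ is generated by the class of $(x-1)^{d}$ since $d<\deg(A)$. By $(\ref{Quillenres})$, $u$ and $v$ are the polynomials $p^{\alpha(0)-n}(x-1)^{d}$ and $p^{\max\{0,\alpha(k_{1})-n\}}(x^{p^{k_{1}}}-1)^{d}$, and because $(x^{p^{k_{1}}}-1)^{d}\equiv p^{dk_{1}}(x-1)^{d}$ modulo $(x-1)^{d+1}$, the restriction map $r\colon\Gamma^{d/d+1}(X)\to\Gamma^{d/d+1}(X_{E})$ sends $\bar u\mapsto p^{b_{0}}$ and $\bar v\mapsto p^{b_{1}}$ with $b_{0}=\alpha(0)-n$, $b_{1}=\max\{0,\alpha(k_{1})-n\}+dk_{1}$. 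Assume $b_{0}\le b_{1}$ (the case $b_{0}>b_{1}$ is symmetric, with $t_{1}$ and $M_{1}=p^{(d-1)k_{1}}$ in place of $t'_{1}$ and $M'_{1}$). Then $z:=\bar v-p^{b_{1}-b_{0}}\bar u$ lies in $\Ker(r)$ and $\Gamma^{d/d+1}(X)=\Z\bar u+\Z z$; since $r(\bar u)=p^{b_{0}}\ne0$, every element $a\bar u+bz\in\Ker(r)$ has $a=0$, so $\Ker(r)=\Z z$. Now $z=-t'_{1}$ in the notation of the proof of Theorem \ref{mainprop} (condition $(\ref{tiprimeconditioninthm})$ holds for $i=1$: its first inequality is $b_{0}\le b_{1}$, and the remaining strict one follows from $(\ref{alphadifference})$ and $n<\epsilon(1)$), and $(\ref{mnannihilators})$ there gives $M'_{1}t'_{1}\in\Gamma^{d+1}(X)$ with $M'_{1}=p^{\epsilon(1)-\max\{\alpha(k_{1}),n\}}=p^{\lambda(1,d)}$. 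Hence $\Ker(r)=\Z z$ is cyclic of order dividing $p^{\lambda(1,d)}$; since $\Gamma^{d/d+1}(X)_{\tors}\subseteq\Ker(r)$ (the target of $r$ is torsion-free) and has at least $p^{\lambda(1,d)}$ elements by Theorem \ref{mainprop}, we conclude $\Gamma^{d/d+1}(X)_{\tors}=\Z z$, cyclic of order exactly $p^{\lambda(1,d)}$.

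The main obstacle is the last paragraph: one needs that the two Quillen generators $\bar u,\bar v$ already generate the quotient (this is precisely the doubly-reduced hypothesis when $m=1$) and, more importantly, that $\Ker(r)$ is no larger than the cyclic group forced by the single relation visible after extending to a splitting field. Combining this rigidity with the exact annihilator $M'_{1}=p^{\lambda(1,d)}$ of $t'_{1}$ from the proof of Theorem \ref{mainprop} and with the matching lower bound of that theorem is what turns the estimate into a computation; the remaining points — identifying $u,v$ with their explicit polynomial forms and the congruence $(x^{p^{k_{1}}}-1)^{d}\equiv p^{dk_{1}}(x-1)^{d}$ modulo $(x-1)^{d+1}$ — are routine.
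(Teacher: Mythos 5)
Your proposal is correct and follows essentially the same route as the paper: specialize Theorem \ref{mainprop} using $v_{p}(d)=[\log_{p}d]=n$ to get the stated $\lambda(i,d)$ and nontriviality via $\lambda(1,d)\geq 1$, then for the doubly reduced length-one case identify the torsion with the kernel of restriction to the split form, generated by $t_{1}$ or $t'_{1}$ and annihilated exactly by $M_{1}$ or $M'_{1}=p^{\lambda(1,d)}$ via (\ref{mnannihilators}). The only difference is that you spell out the kernel computation that the paper leaves as a one-line assertion; the content is the same.
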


\begin{proof}
Let $1< d< p^{\epsilon(1)}$ be an integer of the form $jp^{n}$ with $1\leq j<p$ and $0\leq n< \epsilon(1)$. Then, we have $\lambda(1, d)=\min\{{(d-1)k_{1}}, \epsilon(1)-\max\{\alpha(k_{1}), n\}\}\geq 1$, thus the first result follows from Theorem \ref{mainprop}. Assume that $A$ has the doubly reduced sequence of $m=1$. Then, the classes of $f(\alpha(0), d)$ and $f(\alpha(k_{1}), d)$ generate $\Gamma^{d/d+1}(\SB(A))$. Hence, by (\ref{mnannihilators}) we have $\Gamma^{d/d+1}(\SB(A))_{\tors}=\Z/p^{\lambda(1, d)}\Z$ which is generated by $t_{1}$ and $t'_{1}$, respectively.\end{proof}

\begin{corollary}\label{indexsquareexponentp}
Let $A$ be a $p$-primary algebra. If the reduced sequence of $A$ satisfies $\alpha(k_{1})=0$, then for any $2\leq d< p^{\epsilon(1)}$, the group $\Gamma^{d/d+1}(\SB(A))_{\tors}$ is nontrivial and \[ |\Gamma^{d/d+1}(\SB(A))_{\tors}|\geq \min\{ p^{(d-1)k_{1}+v_{p}(d)-[\log_{p}{d}]}, p^{\epsilon(1)-[\log_{p}{d}]} \}.\] 
In particular, if $A$ is a division algebra of index $p^{2}$ and exponent $p$ for $p$ an odd prime $($resp. index $8$ and exponent $2$$)$, then for any $2\leq d\leq p-1$ $($resp. $2\leq d\leq 3$$)$
\[\bigoplus_{d=0}^{p^{2}-1} \Gamma^{d/d+1}(\SB(A))_{\tors}=(\Z/p\Z)^{\oplus p-2}\, \text{ and }\, \Gamma^{d/d+1}(\SB(A))_{\tors}=\Z/p\Z\]
$($resp. $\bigoplus_{d=0}^{7} \Gamma^{d/d+1}(\SB(A))_{\tors}=(\Z/2\Z)^{\oplus 2}\, \text{ and }\, \Gamma^{d/d+1}(\SB(A))_{\tors}=\Z/2\Z )$.\end{corollary}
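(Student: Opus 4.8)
The plan is to deduce this from Theorem \ref{mainprop} and Corollary \ref{mainpropcor}, with the general bound coming directly from the theorem and the sharpness for the two named cases coming from the doubly reduced sequence of length one. First I would treat the general lower bound. Since the reduced sequence satisfies $\alpha(k_{1})=0$, we have $m=1$ and $\alpha(0)=\epsilon(1)+k_{1}$ (because $\epsilon(1)=\alpha(k_{0})+k_{0}-k_{1}=\alpha(0)-k_{1}$). For an integer $2\leq d<p^{\epsilon(1)}$, I would substitute $\alpha(k_{1})=0$ into the formula for $\lambda(1,d)$ in Theorem \ref{mainprop}: the first case gives $\epsilon(1)+v_{p}(d)-[\log_{p}d]-(\alpha(0)-dk_{1})=\epsilon(1)+v_{p}(d)-[\log_{p}d]-(\epsilon(1)+k_{1}-dk_{1})=(d-1)k_{1}+v_{p}(d)-[\log_{p}d]$, while the second case gives $\epsilon(1)+v_{p}(d)-[\log_{p}d]-v_{p}(d)=\epsilon(1)-[\log_{p}d]$; the dichotomy between the cases is exactly $\alpha(0)\gtrless dk_{1}+v_{p}(d)$, so $|\Gamma^{d/d+1}(\SB(A))_{\tors}|$ is at least $p$ to the power $\min$ of these two quantities, which is the stated bound. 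I should check $\lambda(1,d)\geq 1$ so that the group is genuinely nontrivial: $(d-1)k_{1}+v_{p}(d)\geq [\log_{p}d]$ always (when $k_{1}\geq 1$, since $(d-1)k_1\ge d-1\ge [\log_p d]$; when $k_1=0$ the first case cannot occur), and $\epsilon(1)-[\log_{p}d]\geq 1$ precisely because $d<p^{\epsilon(1)}$.

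Next I would handle the two sharp cases. If $A$ has index $p^{2}$ and exponent $p$, its $p$-sequence is $(2,0)$, hence the reduced sequence is $(2,0)$ with $k_{1}=1$, $\epsilon(1)=\alpha(0)-k_{1}=1$; by Example \ref{firstexampleintorsiondouble}$(i)$ a length-one reduced sequence is always doubly reduced, so Corollary \ref{mainpropcor} applies. For $2\leq d\leq p-1$ (which is the range $1<d=jp^{0}<p^{\epsilon(1)}=p$), Corollary \ref{mainpropcor} says $\Gamma^{d/d+1}(\SB(A))_{\tors}$ is cyclic of order $p^{\lambda(1,d)}$ with $\lambda(1,d)=\min\{(d-1)k_{1}-\alpha(0)+\epsilon(1)k_{1}\dots\}$ — more cleanly, using the formula just computed with $k_{1}=1$, $\epsilon(1)=1$, $n=0$: $\lambda(1,d)=\min\{(d-1)\cdot 1+0-0,\ 1-\max\{0,0\}\}=\min\{d-1,1\}=1$. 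So each such quotient is exactly $\Z/p\Z$. For the remaining degrees $d$ with $0\le d\le p^2-1$ outside $[2,p-1]$, I must argue the torsion vanishes: $d=0,1$ give torsion-free quotients (stated in the introduction), $d=\dim \SB(A)=p^{2}-1$ is torsion-free, and for $p\le d\le p^2-2$ I would note that Theorem \ref{mainprop} requires $d<p^{\epsilon(1)}=p$, and more to the point invoke the computation of the whole gamma filtration for index $p^2$, exponent $p$ — this is where the direct-sum equality $\bigoplus_{d}\Gamma^{d/d+1}_{\tors}=(\Z/p\Z)^{\oplus p-2}$ comes from: there are exactly the $p-2$ nontrivial quotients in degrees $2,\dots,p-1$. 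The octonion-type case index $8$, exponent $2$ is identical with $p=2$: $p$-sequence $(3,0)$, $k_{1}=1$, $\epsilon(1)=2$, doubly reduced of length one, and for $d=2,3$ (the range $1<d<2^{2}=4$) one gets $\lambda(1,2)=\min\{(2-1)\cdot1+1-0,\ 2-\max\{0,1\}\}=\min\{2,1\}=1$ and $\lambda(1,3)=\min\{(3-1)\cdot1+0-0,\ 2-\max\{0,0\}\}=\min\{2,2\}=2$?

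I should be careful here: recomputing with the general $\lambda$-formula of Theorem \ref{mainprop} at $p=2$, $d=3$: $v_{2}(3)=0$, $[\log_{2}3]=1$, $\alpha(0)=3$, $k_{1}=1$, so $dk_{1}+\max\{\alpha(k_1),v_p(d)\}=3+0=3=\alpha(0)$, landing in the first case, $\lambda(1,3)=2+0-1-(3-3)=1$; and $d=2$: $v_{2}(2)=1$, $[\log_2 2]=1$, $dk_1+\max\{0,1\}=2+1=3=\alpha(0)$, first case, $\lambda(1,2)=2+1-1-(3-2)=1$. So both quotients are $\Z/2\Z$, and together with $d=0,1,7$ torsion-free this forces $\bigoplus_{d=0}^{7}\Gamma^{d/d+1}_{\tors}=(\Z/2\Z)^{\oplus 2}$. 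The main obstacle I anticipate is \emph{not} the lower bounds, which are bookkeeping with the $\lambda$-formula, but proving the torsion \emph{vanishes} in the unlisted degrees so that the direct-sum equalities are sharp: for this I would either cite the explicit description of the gamma filtration for these small algebras (via Lemma \ref{Karlemma} applied to the $p$-sequence $(2,0)$ or $(3,0)$, where the only generators are the $f(\alpha(k_i),\beta)$ and one checks directly that outside degrees $2,\dots,p-1$ the quotient is cyclic generated by the image of $\binom{p}{d}$-type classes with no torsion) or reduce to Karpenko's computation; making this vanishing argument airtight for all intermediate degrees at once is the delicate point.
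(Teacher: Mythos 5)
Your treatment of the first statement is correct and is exactly the paper's: substitute $\alpha(k_{1})=0$ into the $\lambda(i,d)$ formula of Theorem \ref{mainprop} and observe the two cases of the minimum collapse to $(d-1)k_{1}+v_{p}(d)-[\log_{p}d]$ and $\epsilon(1)-[\log_{p}d]$, both $\geq 1$ in the stated range. The gap is in the second half. You correctly identify that the hard part is the \emph{vanishing} of torsion in the unlisted degrees (and, implicitly, the upper bound $\Z/p\Z$ in the listed ones), but you never actually supply the argument --- you only gesture at ``citing the explicit description of the gamma filtration'' or ``reducing to Karpenko's computation.'' The paper closes this with a specific global counting tool you do not invoke: Karpenko's formula \cite[Proposition 2]{Kar95},
\[|\bigoplus_{d=0}^{N-1}\Gamma^{d/d+1}(X)_{\tors}|\cdot|K(X_{E})/K(X)|=\prod_{d=1}^{N-1}|\Gamma^{d/d+1}(X_{E})/\Im(\res^{d/d+1})|.\]
One computes $|K(X_{E})/K(X)|=p^{2p(p-1)}$ (resp.\ $2^{12}$) from the Quillen basis (\ref{Quillenbasislattice}), and bounds the right-hand side using the explicit generators $\sum_{n}\binom{p}{n}y^{n}\in\Gamma^{1}$ and $p^{2}y^{d}/\gcd(d,p^{2})\in\Gamma^{d}$ (resp.\ the analogous list for index $8$), yielding $|\bigoplus_{d}\Gamma^{d/d+1}(X)_{\tors}|\leq p^{p-2}$ (resp.\ $2^{2}$). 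Only then does the lower bound of $p$ in each of the $p-2$ (resp.\ $2$) listed degrees force equality everywhere at once, including the vanishing elsewhere. Without some such global upper bound your direct-sum equalities are unproved.

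A second, smaller defect: for the index-$8$, exponent-$2$ case you lean on Corollary \ref{mainpropcor} to get that $\Gamma^{3/4}$ is \emph{exactly} cyclic of order $2^{\lambda(1,3)}$, but $d=3$ is not of the form $j2^{n}$ with $1\leq j<2$, so that corollary does not apply to it (and the doubly-reduced property needed there is only guaranteed by Lemma \ref{firstlemma} for $d\leq p=2$ anyway). Theorem \ref{mainprop} gives only the lower bound $\Z/2\Z\subseteq\Gamma^{3/4}(\SB(A))_{\tors}$; the exact value again comes only from the counting formula. So the proposal is correct in outline for the general lower bound but is missing the essential ingredient for both sharpness claims.
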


\begin{proof}
If $\alpha(k_{1})=0$, then $\lambda(1, d)=\min\{(d-1)k_{1}+v_{p}(d)-[\log_{p}{d}], \epsilon(1)-[\log_{p}{d}]\}\geq 1$, thus the first statement follows from Theorem \ref{mainprop}. 

Let $A$ be a division algebra of index $p^{2}$ and exponent $p$ for $p$ an odd prime, $X=\SB(A)$ and $E$ a splitting field of $X$. We first measure the size of the torsion subgroups by using the following formula \cite[Proposition 2]{Kar95}
\begin{equation}\label{usekarfor}
|\bigoplus_{d=0}^{p^{2}-1}\Gamma^{d/d+1}(X)_{\tors}|\cdot|K(X_{E})/K(X)|=\prod_{d=1}^{p^{2}-1}|\Gamma^{d/d+1}(X_{E})/\Im(\res^{d/d+1})|,
\end{equation}
where $\res^{d/d+1}: \Gamma^{d/d+1}(X)\to \Gamma^{d/d+1}(X_{E})$ is the restriction map. 

It follows from (\ref{Quillenbasislattice}) that $|K(X_{E})/K(X)|=p^{2p(p-1)}$. By (\ref{gengamma}), we have 
\begin{equation*}
\sum_{n=1}^{p}{{p}\choose{n}}y^{n}\in \Gamma^{1}(X)\, \text{ and }\, p^{2}\cdot y^{d}/\gcd(d, p^{2})\in \Gamma^{d}(X) 
\end{equation*}
for all $2\leq d\leq p^{2}-1$. Hence, by (\ref{usekarfor}) we obtain $|\bigoplus_{d=0}^{p^{2}-1}\Gamma^{d/d+1}(X)_{\tors}|\leq p^{p-2}$. As $(\alpha(0), \alpha(1))=(2, 0)$ is the reduced sequence of $A$, we have $\lambda(1, d)=1$ for all $2\leq d\leq p-1$, thus the result follows from the first statement of the corollary.

Now we assume that $A$ is a division algebra of index $8$ and exponent $2$. It follows from (\ref{gengamma}) that \[2y\in \Gamma^{1}(X),\,\, 2^{2}y^{5}\in \Gamma^{5}(X), \text{ and } 2^{3}y^{d}/\gcd(d, 2^{3})\in \Gamma^{d}(X)\]
for $2\leq d\neq 5\leq 7$. As $|K(X_{E})/K(X)|=2^{12}$, we have $|\bigoplus_{d=0}^{7}\Gamma^{d/d+1}(X)_{\tors}|\leq 2^{2}$. Since $\lambda(1, d)=1$ for $d=2, 3$, the result follows.\end{proof}

%---------            Proposition:  Annihilator of the torsion                 --------------------------------------------------

We provide a new upper bound for the annihilator of the torsion subgroup of $\Gamma^{d/d+1}(\SB(A))$.

\begin{proposition}\label{propsecond}
Let $A$ be a $p$-primary algebra having the doubly reduced sequence $\big(\alpha(k_{i})\big)_{i=0}^{m}$ of degree $d\geq 2$ of the form $jp^{n}$ with $1\leq j<p$ and $0\leq n<\epsilon(m)$. Then, the torsion subgroup $\Gamma^{d/d+1}(\SB(A))_{\tors}$ is annihilated by
$$p^{\alpha(0)-\max\{\alpha(k_{m}), v_{p}(d)\}-k_{m}} $$ 
if $dk_{i}+\max\{\alpha(k_{i}), v_{p}(d)\}\geq \alpha(0)$ for all $i$ and is annihilated by 
$$\max\{p^{\alpha(0)-\max\{\alpha(k_{m}), v_{p}(d)\}-k_{m}}, p^{(d-1)k_{l}}\}\quad (\text{resp.} \max\{p^{\alpha(0)-\alpha(k_{m-1})-k_{m-1}}, p^{(d-1)k_{m}}\})$$
if $dk_{l}+\max\{\alpha(k_{l}), v_{p}(d)\}\leq \alpha(0)\leq \min\limits_{i\neq l}\{ dk_{i}+\max\{\alpha(k_{i}), v_{p}(d)\}\}$ 
for some $1\leq l<m$ $($resp. $l=m)$.
\end{proposition}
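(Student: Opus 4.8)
The plan is to reduce the statement to a computation with integer lattices. Write $X=\SB(A)$; for $0\le i\le m$ let $f_i$ be the class of $f(\alpha(k_i),d)$ in $\Gamma^{d/d+1}(X)$. Since $\big(\alpha(k_i)\big)_{i=0}^m$ is doubly reduced of degree $d$, these $m+1$ classes generate $\Gamma^{d/d+1}(X)$. I keep the quantities $M_i=p^{(d-1)k_i}$ and $M'_i=p^{\alpha(0)-\max\{\alpha(k_i),v_p(d)\}-k_i}$ from the proof of Theorem \ref{mainprop}, and I set $r_i=\max\{\alpha(k_i),v_p(d)\}-v_p(d)+dk_i$ and $\delta_i=r_0-r_i=\alpha(0)-\max\{\alpha(k_i),v_p(d)\}-dk_i$, so that $M'_i=M_i\,p^{\delta_i}$. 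Since $j<p$ we have $v_p(d)=n$, and $n<\epsilon(m)\le\alpha(0)-k_m$ forces $\alpha(0)>\max\{\alpha(k_i),v_p(d)\}+k_i$, so $M_i,M'_i\ge p$. Over a splitting field $E$ one has $\Gamma^{d/d+1}(X_E)=\Z\cdot y^d$ with $y=x-1$, and $\res^{d/d+1}(f_i)=p^{r_i}y^d$. Moreover, by $(\ref{alphadifference})$ together with $v_p(d)<\epsilon(m)$ — which guarantees $\alpha(k_i)>v_p(d)$ for $i\le m-1$ — the integer $\max\{\alpha(k_i),v_p(d)\}+k_i$ is strictly decreasing in $i$, so $M'_i$ is strictly increasing in $i$ and $\max_{0\le i\le m}M'_i=M'_m=p^{\alpha(0)-\max\{\alpha(k_m),v_p(d)\}-k_m}$.

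The engine is equation $(\ref{mnannihilators})$. Unwinding the definitions of $e_i,t_i,t'_i$ and using $M'_i=M_ip^{\delta_i}$, it says precisely that $M_if_0=M'_if_i$ in $\Gamma^{d/d+1}(X)$ for every $1\le i\le m$. Let $\pi\colon\Z^{m+1}\twoheadrightarrow\Gamma^{d/d+1}(X)$ send $e_i\mapsto f_i$, put $K_0=\ker\!\big(\Z^{m+1}\xrightarrow{(p^{r_0},\dots,p^{r_m})}\Z\big)$, and let $R_0\subseteq\Z^{m+1}$ be the subgroup generated by the elements $M'_ie_i-M_ie_0$ for $1\le i\le m$. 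From $M'_ip^{r_i}=M_ip^{r_0}$ one gets $R_0\subseteq K_0$, and from the relations above $R_0\subseteq\ker\pi$. Now if $\tau\in\Gamma^{d/d+1}(X)_{\tors}$, then $\tau$ maps to $0$ in the torsion-free group $\Gamma^{d/d+1}(X_E)$, so any lift of $\tau$ to $\Z^{m+1}$ lies in $K_0$. Hence it suffices to show that the exponent of the finite abelian group $K_0/R_0$ divides the asserted annihilator $N$: then $N\tau\in\pi(R_0)=0$.

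The remaining task is to compute $\exp(K_0/R_0)$, which I would do by exhibiting an explicit basis. Choose an index $i^\ast$ with $r_{i^\ast}=\min_jr_j$: in the first case all $\delta_i\le0$, so take $i^\ast=0$; in the second case the index $l$ from the hypothesis has $\delta_l\ge0$ and $\delta_i\le0$ for $i\ne l$, so take $i^\ast=l$ (and then $r_l=\min_jr_j$). In either case the elements $z_i=p^{\,r_i-r_{i^\ast}}e_{i^\ast}-e_i$ ($i\ne i^\ast$) lie in $K_0$, and since every $v\in K_0$ satisfies $v=-\sum_{i\ne i^\ast}v_iz_i$ (with $v_i$ the $i$-th coordinate of $v$), they form a $\Z$-basis of $K_0$. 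Rewriting each generator $M'_ie_i-M_ie_0$ of $R_0$ in this basis: in the first case $R_0=\bigoplus_{i=1}^m\Z\cdot M'_iz_i$, so $K_0/R_0\cong\bigoplus_i\Z/M'_i\Z$ has exponent $\max_iM'_i=M'_m$, which is the first bound. In the second case $R_0$ is generated by $M_lz_0$ and by the elements $M_iz_0-M'_iz_i$ for $i\ne l$, $i\ge1$; reading off orders of basis classes in the quotient (using $M'_i/M_i=p^{\delta_i}$ and $\gcd(M_i,M_l)=p^{(d-1)\min(k_i,k_l)}$) gives $\ord(z_0)=M_l$, $\ord(z_i)=M'_i$ if $k_i>k_l$, and $\ord(z_i)=M_l\,p^{\delta_i}\le M_l$ if $k_i<k_l$, hence $\exp(K_0/R_0)=\max\{M_l,\ \max_{i>l}M'_i\}$. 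For $l<m$ this equals $\max\{M_l,M'_m\}=\max\{p^{(d-1)k_l},\,p^{\alpha(0)-\max\{\alpha(k_m),v_p(d)\}-k_m}\}$; for $l=m$ there is no index $>l$, so it equals $M_m=p^{(d-1)k_m}$, which divides $\max\{p^{\alpha(0)-\alpha(k_{m-1})-k_{m-1}},p^{(d-1)k_m}\}$ (here $\max\{\alpha(k_{m-1}),v_p(d)\}=\alpha(k_{m-1})$ because $v_p(d)<\epsilon(m)\le\alpha(k_{m-1})$). In every case $\exp(K_0/R_0)\mid N$, completing the proof.

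The main obstacle is the honest lattice bookkeeping in the last step: checking that the $z_i$ form a basis of $K_0$, expressing the generators of $R_0$ in that basis, and extracting the orders of the basis classes in $K_0/R_0$ by Smith-normal-form-type reasoning — routine but sensitive to the $\min/\max$ patterns among $\alpha(k_i)$, $v_p(d)$ and $dk_i$, and to the sign of $\delta_i$. A secondary point needing care is the monotonicity of $i\mapsto\max\{\alpha(k_i),v_p(d)\}+k_i$, which is exactly where the hypotheses $(\ref{alphadifference})$ and $n=v_p(d)<\epsilon(m)$ come in.
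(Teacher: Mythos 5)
Your proposal is correct and rests on exactly the same ingredients as the paper's proof: the relations $M_if_0=M'_if_i$ coming from $(\ref{mnannihilators})$, the doubly-reduced hypothesis to get the generators $f(\alpha(k_i),d)$, and restriction to a splitting field to kill the free part; the paper simply phrases the final step as a change of generators (any torsion class is a combination of the classes of $t'_i$, resp.\ of $t_l$ and the $t'_i$, each killed by $M'_i$, resp.\ $M_l$), whereas you repackage it as computing the exponent of $K_0/R_0$ for an explicit presentation. Your lattice bookkeeping checks out (and in the $l=m$ case even yields the slightly sharper annihilator $p^{(d-1)k_m}$), so this is essentially the paper's argument in Smith-normal-form clothing.
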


\begin{proof}
Let $A$ be a central simple algebra of $p$-power degree which admits the doubly reduced sequence $\big(\alpha(k_{i})\big)_{i=0}^{m}$ of degree $d$ of the form $jp^{n}$ with $1\leq j< p$ and $0\leq n< \epsilon(m)$, i.e., the classes of $f(\alpha(0), d), \ldots, f(\alpha(k_{m}), d)$ generate $\Gamma^{d/d+1}(X)$, where $X=\SB(A)$. Consider the elements $t_{i}$, $t'_{i}$, $\epsilon(i)$, $M_{i}$, and $M'_{i}$ as defined in the proof of Theorem \ref{mainprop}. We split into two cases.

{\it Case 1 :} $dk_{i}+\max\{\alpha(k_{i}), v_{p}(d)\}\geq \alpha(0)$ for all $1\leq i\leq m$. Then, the degree $d$ satisfies (\ref{tiprimeconditioninthm}) for any $i$, thus by the proof of Theorem \ref{mainprop} the class of $t'_{i}$ is a torsion element in $\Gamma^{d/d+1}(X)$. As the order of the class of $f(\alpha(k_{i}), d)$ is not finite, any torsion element in $\Gamma^{d/d+1}(X)$ is a linear combination of the classes of $t'_{1},\ldots, t'_{m}$. Since each class $t'_{i}$ is annihilated by $M'_{i}$ and $M'_{1}<\cdots <M'_{m}$, the torsion subgroup of $\Gamma^{d/d+1}(X)$ is annihilated by $M'_{m}$. 

\smallskip

{\it Case 2:} $dk_{l}+\max\{\alpha(k_{l}), v_{p}(d)\}\leq \alpha(0)$ for some $1\leq l\leq m$ and $\alpha(0)\leq dk_{i}+\max\{\alpha(k_{i}), v_{p}(d)\}$ for all $i\neq l$. Then, the degree $d$ satisfies (\ref{ticonditioninthm}) for $i=l$. Hence, the class of $t_{l}$ is a torsion element in $\Gamma^{d/d+1}(X)$. As the class of $t'_{i}$ is a torsion element in $\Gamma^{d/d+1}(X)$, any torsion element in $\Gamma^{d/d+1}(X)$ is a linear combination of the classes of $t_{l}$ and $t'_{i}$. As the class $t_{l}$ is annihilated by $M_{l}$ and the class $t'_{i}$ is annihilated by $M'_{i}$, the torsion subgroup of $\Gamma^{d/d+1}(X)$ is annihilated by $\max\{M_{l}, M'_{i}\}$. \end{proof}

\begin{example}\label{examforprop}
$(i)$ Let $A$ be a central simple algebra of $\ind(A)=5^{8}$ and $\exp(A)=5^{5}$ whose $5$-sequence is $(8,7,4,2,1,0)$. As $(8, 4, 2)$ is the reduced sequence of $A$, by Corollary \ref{mainpropcor}, the group $\Gamma^{d/d+1}(\SB(A))_{\tors}$ is nontrivial for any $d=j\cdot 5^{n}\geq $ with $1\leq j\leq 4$ and $0\leq n\leq 5$. Moreover, it follows from Proposition \ref{propsecond}  that for $d\leq 5$, the group $\Gamma^{d/d+1}(\SB(A))_{\tors}$ is annihilated by $5^{3}$.

\smallskip

$(ii)$ Consider the central simple algebra of $A$ of $\ind(A)=3^{4}$ such that its $3$-sequence is $(4, 2, 0)$. By Corollary \ref{mainpropcor}, the torsion subgroup $\Gamma^{2/3}(\SB(A))_{\tors}$ is nontrivial. As we have shown in Example \ref{firstexampleintorsiondouble} $(ii)$, the algebra $A$ admits the doubly reduced sequence. Hence, by Proposition \ref{propsecond} the torsion subgroup $\Gamma^{2/3}(\SB(A))_{\tors}$ is annihilated by $3^2$.

\end{example}

%---------             Section 3                    ------------------------------------------------------
\section{Torsion in the higher Chow groups of Severi-Brauer varieties} 

In this section we apply Theorem \ref{mainprop}, Corollaries \ref{mainpropcor} and \ref{indexsquareexponentp} to provide a torsion in a higher quotient of the topological filtration and the Chow groups of a generic variety associated to a central simple algebra (see Corollary \ref{corollaryone}). As an application, we show indecomposability of certain algebras in Corollary \ref{coroforindecomnew}.

Consider the topological filtration on the Grothendieck ring $K(X)$ of a smooth projective variety $X$
\[K(X)=T^{0}(X)\supset T^{1}(X)\supset \ldots \supset T^{d}(X)\supset \ldots\]
given by the ideal $T^{d}(X)$ generated by the class $[\mathcal{O}_{Y}]$ of the structure sheaf of a closed subvariety $Y$ of codimension at least $d$. Note that the gamma filtration $\Gamma^{d}(X)$ is contained in the topological filtration $T^{d}(X)$. There is a natural surjective map from the Chow group $\CH^{d}(X)$ of cycles modulo the rational equivalence relation to the quotient of the topological filtration:
\begin{equation}\label{naturaltopChow}
\CH^{d}(X)\twoheadrightarrow T^{d}(X)/T^{d+1}(X),
\end{equation}
given by $[Y]\mapsto [\mathcal{O}_{Y}]$. We simply write $T^{d/d+1}(X)$ for the quotient $T^{d}(X)/T^{d+1}(X)$. By the Riemann-Roch theorem, there is a reverse map $T^{d/d+1}(X)\to \CH^{d}(X)$ induced by the $d$th Chern class and their composition 
\begin{equation}\label{compositecp}
\CH^{d}(X)\twoheadrightarrow T^{d/d+1}(X)\to \CH^{d}(X),
\end{equation}
is the multiplication by $(-1)^{d-1}(d-1)!$ \cite[Ex.15.3.6]{Fu}.

We now recall the notions of \emph{generic variety} and \emph{generic algebra} from \cite[Definition 3.12]{Kar}: the notion of generic algebra was originally introduced by Schofield and Van den Bergh \cite{SV}, but here we use a variation of it. Let $A$ be a $p$-primary algebra and let $\big(\alpha(k)\big)$ be the corresponding $p$-sequence. Choose any division algebra $B$ of $\ind(B)=\exp(B)=\ind(A)$ over a field $K$ and consider the function field $\bar{K}$ of the product of generalized Severi-Brauer varieties $\SB(p^{\alpha(k)}, B^{\tens p^{k}})$ over all $k\geq 1$. Then, by the index reduction formula, the algebra $\bar{A}:=B_{\bar{K}}$, called a \emph{generic algebra}, has the same $p$-sequence of $A$. The corresponding Severi-Brauer variety $\SB(\bar{A})$ is called a \emph{generic variety} associated to $A$ and is denoted by $\bar{X}$. Moreover, by \cite[Theorem 3.7]{Kar}, we have
\begin{equation*}
\Gamma^{d/d+1}(\bar{X})=T^{d/d+1}(\bar{X}).
\end{equation*}  

Applying Theorem \ref{mainprop} and Corollaries \ref{mainpropcor} and \ref{indexsquareexponentp} to a generic variety together with (\ref{naturaltopChow}), we obtain

\begin{corollary}\label{corollaryone}
Let $A$ be a $p$-primary algebra with $\exp(A)< \ind(A)$, $\big(\alpha(k_{i})\big)_{i=0}^{m}$ the corresponding reduced sequence and $\bar{X}$ a generic variety associated to $A$. Then,
\[|\CH^{d}(\bar{X})_{\tors}|\geq |T^{d/d+1}(\bar{X})_{\tors}|\geq \max\{p^{\lambda(i, d)}\,|\, 1\leq i\leq m,\,\, 1< d< p^{\epsilon(i)}\}.\]
In particular, for every $d\geq 2$ of the form $jp^{n}$ with $1\leq j< p$ and $0\leq n< \epsilon(1)$, both the Chow group $\CH^{d}(\bar{X})$ and the topological filtration $T^{d/d+1}(\bar{X})$ contain $p$-torsion. Moreover, if $A$ has the doubly reduced sequence of length one, then for any such $d$, the torsion subgroup $\CH^{d}(\bar{X})_{\tors}$ $($resp. $T^{d/d+1}(\bar{X})_{\tors}$$)$ contains $($resp. coincides with$)$ a cyclic group of order $p^{\lambda(1, d)}$. Moreover, if $\alpha(k_{1})=0$, then for every $2\leq d< p^{\epsilon(1)}$ both groups $\CH^{d}(\bar{X})_{\tors}$ and $T^{d/d+1}(\bar{X})_{\tors}$ are nontrivial.
\end{corollary}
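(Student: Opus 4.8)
The plan is to deduce Corollary \ref{corollaryone} from the results already established for the gamma filtration by transporting everything to the generic variety $\bar{X}$, where the gamma and topological filtrations agree. First I would recall, as stated just above, that for a generic variety $\bar{X}$ associated to $A$ one has $\Gamma^{d/d+1}(\bar{X})=T^{d/d+1}(\bar{X})$ by \cite[Theorem 3.7]{Kar}; hence $T^{d/d+1}(\bar{X})_{\tors}=\Gamma^{d/d+1}(\bar{X})_{\tors}$. Since $\bar{A}$ has the same $p$-sequence as $A$, it has the same reduced sequence $\big(\alpha(k_{i})\big)_{i=0}^{m}$ and the same $\epsilon(i)$, and $\exp(\bar{A})<\ind(\bar{A})$. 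Applying Theorem \ref{mainprop} to $\bar{A}$ therefore gives
\[
|T^{d/d+1}(\bar{X})_{\tors}|=|\Gamma^{d/d+1}(\bar{X})_{\tors}|\geq \max\{p^{\lambda(i,d)}\mid 1\leq i\leq m,\ 1<d<p^{\epsilon(i)}\}.
\]

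Next I would invoke the surjection (\ref{naturaltopChow}): $\CH^{d}(\bar{X})\twoheadrightarrow T^{d/d+1}(\bar{X})$. A surjection of abelian groups restricts to a surjection on torsion subgroups only in general when the groups are, say, finitely generated; but in any case a surjection $G\twoheadrightarrow H$ carries $G_{\tors}$ onto $H_{\tors}$ whenever every torsion element of $H$ lifts to a torsion element of $G$ — and here I would argue more simply: any subgroup of $T^{d/d+1}(\bar{X})_{\tors}$ of size $p^{\lambda(i,d)}$ pulls back to a subgroup of $\CH^{d}(\bar{X})$ that surjects onto it, so $\CH^{d}(\bar{X})$ has a subgroup mapping onto a group of order $p^{\lambda(i,d)}$; composing with the Riemann–Roch map $T^{d/d+1}(\bar{X})\to\CH^{d}(\bar{X})$ of (\ref{compositecp}), whose composite with the surjection is multiplication by $\pm(d-1)!$, shows the relevant $p$-torsion survives in $\CH^{d}(\bar{X})$ provided $v_p((d-1)!)$ is small enough. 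This is where the restriction $d=jp^{n}$ with $1\leq j<p$ enters: for such $d$ one has $[\log_p d]=n$ and $v_p(d)=n$, and the exponents $\lambda(i,d)$ appearing in Corollary \ref{mainpropcor} and Corollary \ref{indexsquareexponentp} are designed so that the corresponding torsion classes are not killed by multiplication by $(d-1)!$; concretely, I would check that for these $d$ the $p$-primary part of $(d-1)!$ does not annihilate the generator $t_i$ or $t'_i$ of the torsion subgroup, using (\ref{mnannihilators}) and the non-vanishing argument from the proof of Theorem \ref{mainprop}.

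For the three "in particular" clauses I would specialize: (a) taking any $i$ and any $d=jp^n$ with $1\leq j<p$, $0\leq n<\epsilon(1)$, Corollary \ref{mainpropcor} gives $\lambda(1,d)\geq 1$, so $T^{d/d+1}(\bar{X})$ has nontrivial $p$-torsion, and by the Riemann–Roch comparison so does $\CH^{d}(\bar{X})$ (here $(d-1)!$ has $p$-adic valuation $n<\epsilon(1)$, which is compatible with a cyclic $p$-torsion group of order $p^{\lambda(1,d)}$ surviving — I would make this bound precise); (b) if the reduced sequence is doubly reduced of length one, Corollary \ref{mainpropcor} says $\Gamma^{d/d+1}(\SB(\bar{A}))_{\tors}$ is cyclic of order exactly $p^{\lambda(1,d)}$, which via $\Gamma^{d/d+1}(\bar{X})=T^{d/d+1}(\bar{X})$ gives the statement for $T^{d/d+1}(\bar{X})_{\tors}$ verbatim and, via (\ref{naturaltopChow}), the "contains a cyclic group of order $p^{\lambda(1,d)}$" statement for $\CH^{d}(\bar{X})_{\tors}$; (c) if $\alpha(k_1)=0$, Corollary \ref{indexsquareexponentp} gives nontriviality of $\Gamma^{d/d+1}(\SB(\bar{A}))_{\tors}$ for $2\leq d<p^{\epsilon(1)}$, which transports identically.

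The main obstacle I anticipate is controlling the passage from the nontriviality of $T^{d/d+1}(\bar{X})_{\tors}$ back to $\CH^{d}(\bar{X})_{\tors}$: the surjection (\ref{naturaltopChow}) goes the wrong way for torsion, so one must use the Riemann–Roch section (\ref{compositecp}) together with the fact that the composite is multiplication by $\pm(d-1)!$, and verify that the $(d-1)!$ factor does not swallow the $p$-torsion we have produced. The numerical hypothesis $d=jp^n$, $1\leq j<p$, is exactly what keeps $v_p((d-1)!)$ under control relative to $\lambda(i,d)$, so the crux is a careful bookkeeping of $p$-adic valuations — essentially checking that $v_p\big((d-1)!\big) < \lambda(i,d)$ (or, in the cyclic case, that the image of the order-$p^{\lambda(1,d)}$ class under multiplication by $(d-1)!$ is still nonzero), which follows from the explicit formula for $\lambda$ and the elementary estimate $v_p((d-1)!)\leq n$ for $d=jp^n$ with $j<p$. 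Everything else is a direct transport of already-proved statements along the identification $\Gamma=T$ on $\bar{X}$.
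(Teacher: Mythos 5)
Your reduction of the statements about $T^{d/d+1}(\bar{X})$ to Theorem \ref{mainprop} and Corollaries \ref{mainpropcor} and \ref{indexsquareexponentp} is exactly the intended route and is fine: $\bar{A}$ has the same $p$-sequence as $A$, hence the same reduced sequence, the same $\epsilon(i)$ and $\lambda(i,d)$, and $\Gamma^{d/d+1}(\bar{X})=T^{d/d+1}(\bar{X})$ by \cite[Theorem 3.7]{Kar}, so all the gamma-filtration results transport verbatim. The gap is in the step you yourself flag as the main obstacle, namely passing from $T^{d/d+1}(\bar{X})_{\tors}$ to $\CH^{d}(\bar{X})_{\tors}$. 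The mechanism you propose --- composing with the Riemann--Roch map of (\ref{compositecp}) and arguing that multiplication by $(d-1)!$ cannot kill the torsion because $v_{p}((d-1)!)\leq n$ for $d=jp^{n}$, $j<p$ --- does not work. The estimate is false: $v_{p}((d-1)!)=j(p^{n}-1)/(p-1)-n$, which grows like $jp^{n-1}$; for instance $v_{2}(7!)=4>3$. Concretely, for $p=2$ and reduced sequence $(\alpha(0),\alpha(k_{1}))=(5,0)$ with $k_{1}=1$ one has $\epsilon(1)=4$, $d=8$ is admissible, $\lambda(1,8)=1$, and yet $v_{2}(7!)=4$, so multiplication by $\pm 7!$ annihilates the $\Z/2\Z$ you produced and your argument proves nothing about $\CH^{8}(\bar{X})_{\tors}$. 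Moreover, the hypothesis $d=jp^{n}$ with $j<p$ has nothing to do with controlling $(d-1)!$; it is simply the condition under which Corollary \ref{mainpropcor} guarantees $\lambda(i,d)\geq 1$.

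The correct argument needs no condition on $d$ and no Riemann--Roch: the surjection (\ref{naturaltopChow}) already restricts to a surjection on torsion subgroups. For a Severi--Brauer variety the torsion subgroup of $\CH^{d}(\bar{X})$ coincides with the kernel of the restriction $\CH^{d}(\bar{X})\to\CH^{d}(\bar{X}_{E})\simeq\Z$ to a splitting field $E$: that kernel is killed by $\ind(\bar{A})$ via the transfer through a maximal subfield, hence is torsion, while torsion elements die in the torsion-free group $\Z$. The same holds for $T^{d/d+1}(\bar{X})\to T^{d/d+1}(\bar{X}_{E})$. Since $\CH^{d}(\bar{X}_{E})\to T^{d/d+1}(\bar{X}_{E})$ is an isomorphism (projective space) and (\ref{naturaltopChow}) commutes with restriction, any lift to $\CH^{d}(\bar{X})$ of a torsion element of $T^{d/d+1}(\bar{X})$ restricts to $0$ over $E$ and is therefore itself torsion. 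This gives $|\CH^{d}(\bar{X})_{\tors}|\geq|T^{d/d+1}(\bar{X})_{\tors}|$ outright, and in the cyclic case a preimage of a generator has order divisible by $p^{\lambda(1,d)}$, yielding the claimed cyclic subgroup of $\CH^{d}(\bar{X})_{\tors}$. With this replacement the rest of your specializations (a)--(c) go through as written.
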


\begin{example}\label{examplefordecomp}
$(i)$ Let $p$ be an odd prime, $A$ a division algebra of index $p^{2}$ and exponent $p$, and $X_{1}=\SB(A)$. Then by the proof of Corollary \ref{indexsquareexponentp} together with \cite[Proposition 2]{Kar95} we have $\big|\bigoplus_{d=0}^{p^{2}-1} T^{d/d+1}(X_{1})_{\tors}\big|\leq p^{p-2}.$ 

On the other hand, it follows by Corollary \ref{corollaryone} that for any $2\leq d\leq p-1$ and generic variety $\bar{X_{1}}$ associated to $A$ we have $\Z/p\Z\subseteq  T^{d/d+1}(\bar{X_{1}})_{\tors}$. Therefore, the variety $\bar{X_{1}}$ gives an example of having the maximal torsion subgroup $(\Z/p\Z)^{\oplus p-2}$ in the topological filtration, i.e.,
\begin{equation}\label{oddprimetorsionex1}
\big|\bigoplus_{d=0}^{p^{2}-1} T^{d/d+1}(\bar{X_{1}})_{\tors}\big|=p^{p-2}.
\end{equation}
In particular, this shows that there is no torsion in $T^{d/d+1}(\bar{X_{1}})$ for $p\leq d\leq p^{2}$.
 
\smallskip

$(ii)$ Let $A$ be a division algebra of index $8$ and exponent $2$ and $X_{2}=\SB(A)$. Again by the proof of Corollary \ref{indexsquareexponentp} with \cite[Proposition 2]{Kar95}, we get $\big|\bigoplus_{d=0}^{7} T^{d/d+1}(X_{2})_{\tors}\big|\leq 2^{2}$. Since we know by Corollary \ref{corollaryone} that $T^{2/3}(\bar{X_{2}})_{\tors}=T^{3/4}(\bar{X_{2}})_{\tors}=\Z/2\Z$ for any generic variety $\bar{X_{2}}$, we have
\begin{equation}\label{oddprimetorsionex2}
\big|\bigoplus_{d=0}^{7} T^{d/d+1}(\bar{X_{2}})_{\tors}\big|=2^{2},
\end{equation}
thus, the variety has the maximal torsion in the topological filtration.
\end{example}

A division $F$-algebra is called \emph{decomposable} if it can be written as a tensor product of two division $F$-algebras. Let $Y_{1}$ (resp. $Y_{2}$) be the corresponding Severi-Brauer variety of a decomposable algebra of index $p^{2}$ and exponent $p$ (resp. index $8$ and exponent $2$). Karpenko showed that 
\begin{equation}\label{karpenkodecomp}
\big|\bigoplus_{d=0}^{p^{2}-1} T^{d/d+1}(Y_{1})_{\tors}\big|=0\,\, \text{ and }\,\, \big|\bigoplus_{d=0}^{7} T^{d/d+1}(Y_{2})_{\tors}\big|\leq 2.
\end{equation}
in \cite[Theorem 1]{Kar952} and \cite[Theorem 2.5]{Kartri}, respectively. This result was used to prove the indecomposability of the corresponding generic algebras \cite[Corollary 5.4]{Kar}, which recovers results of Tignol \cite{Tignol87} and Amitsur-Rowen-Tignol \cite{ART}.

We remark that Example \ref{examplefordecomp} can be used to prove this result by comparing (\ref{oddprimetorsionex1}) and (\ref{oddprimetorsionex2}) with (\ref{karpenkodecomp}), thus computing the torsion in a higher quotient of the topological filtration can give a way to detect the indecomposability.

For any generic algebra $\bar{A}$ whose $p$-sequence is $(r, r-1, \cdots)$, Schofield and Van den Bergh showed the indecomposability in \cite[Theorem 2.2]{SV}, hence, the existence of indecomposable algebras of exponent bigger than a prime. Here, the assumption $\ind(\bar{A}^{\tens p})=p^{r-1}$ is crucial to use a result of Albert: $\ind(D^{\tens p})| \ind(D)/p$ for any $p$-primary algebra $D$. More precisely, as in the proof of \cite[Theorem 2.2]{SV}, if $\bar{A}\simeq B\tens C$ for algebras $B$ and $C$, then $p^{r-1}=\ind(\bar{A}^{\tens p})| \ind(B^{\tens p})\ind(C^{\tens p})| \ind(\bar{A})/p^{2}=p^{r-2}$, a contradiction. After that Karpenko showed that any generic algebra $\bar{A}$ whose $p$-seqeunce is $(r, 0)$ of length $1$ (i.e., $\bar{A}$ has a prime exponent) except the case $p=r=2$ is indecomposable in \cite[Corollary 5.4]{Kar} by comparing $T^{2/3}(\bar{X})_{\tors}$ \cite[Proposition 5.1]{Kar} with the one of a decomposable algebra, which completes the indecomposability of generic algebras.

In the following, we show that any generic algebra whose $p$-seqeunce is $(r, r-2,\cdots)$ of length $\geq r-2$ is indecomposable in Corollary \ref{coroforindecomnew}. To do this, we first compute an upper bound for the torsion subgroups in the topological filtration of decomposable algebras in Proposition \ref{newindecom}. Then, we compare it with the lower bound in Corollary \ref{corollaryone}. In contrast to earlier approaches \cite{Kar}, we use the torsion in higher degrees.

Let $A$ be a division algebra of $\ind A=p^{r}$ and $\exp(A)=p^{s}$ with $r>s$ and $r\geq 2$. Assume that $A\simeq B\tens C$   for some division algebras $B$ and $C$ of $\ind(B)=p^{k}\leq \ind(C)=p^{r-k}$. Then, the Segre embedding $\SB(B)\times \SB(M_{p^{k}}(C))\hookrightarrow \SB(M_{p^{k}}(A))$ induces 
\begin{equation}\label{Chowcomposition}
\CH^{j}(\SB(B))\tens \CH^{i}(\SB(M_{p^{k}}(C)))\to \CH^{p^{r+k}+1+i+j-p^{r}-p^{k}}(\SB(M_{p^{k}}(A)))
\end{equation}
for $0\leq j\leq p^{k}-1$ and $0\leq i\leq p^{r-k}-1$. Moreover, if $i+j+1-p^{k}\geq 0$, then by \cite[Lemma 1.12]{Mer95} we have 
\begin{equation*}
\CH^{p^{r+k}+1+i+j-p^{r}-p^{k}}(\SB(M_{p^{k}}(A)))=\CH^{i+j+1-p^{k}}(\SB(A)).
\end{equation*}
In this case, we denote by $\theta$ the composition of the map in (\ref{Chowcomposition}) and the restriction map $\CH^{i+j+1-p^{k}}(\SB(A))\to \CH^{i+j+1-p^{k}}(\SB(A)_{E})$ over a splitting field $E$.

We shall write $\overline{\CH}(\SB(A))$ for the image of the restriction map $\CH(\SB(A))\to \CH(\SB(A)_{E})$ over a splitting field $E$ and we identify $\Z\cdot h^{d}=\CH^{d}(\SB(A)_{E})$ with $\Z$, where $h$ is the class of a hyperplane of the projective space $\SB(A)_{E}$.

We will need the following lemmas, which extend \cite[Lemma 5]{Kar952} to an arbitrary exponent.

\begin{lemma}\label{decompolemone}
If $k\geq 2$, then $p^{r-1}\in \overline{\CH}^{\,pn+1}(\SB(A))$ for any $p^{r-1}-p^{k-1}\leq n\leq p^{r-1}-2$.
\end{lemma}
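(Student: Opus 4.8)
The plan is to exploit the homomorphism $\theta$ of $(\ref{Chowcomposition})$. Since $\overline{\CH}^{\,pn+1}(\SB(A))$ is a subgroup of $\CH^{pn+1}(\SB(A)_{E})=\Z\cdot h^{pn+1}\simeq\Z$, it is generated by a single power of $p$, say $p^{c}$, and to prove $p^{r-1}\in\overline{\CH}^{\,pn+1}(\SB(A))$ it suffices to exhibit, for some admissible choice of parameters, an element in the image of $\theta$ of the form $u\,h^{pn+1}$ with $v_{p}(u)\le r-1$. First I would fix integers $i,j$ with $i+j=pn+p^{k}$, subject to $0\le j\le p^{k}-1$ and $0\le i\le p^{r}-1$; one checks these constraints force $1\le j\le p^{k}-1$, and since $n\ge p^{r-1}-p^{k-1}$ and $k\le r/2$ they also force $p^{r}-p^{r-k}\le i\le p^{r}-1$, so that the index reduction of \cite[Lemma 1.12]{Mer95} applies on the factor $\SB(M_{p^{k}}(C))$ as well.

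Next I would evaluate $\theta$ over a splitting field $E$ on external products of generators of the reduced Chow groups of the two factors. If $\alpha\in\CH^{j}(\SB(B))$ and $\beta\in\CH^{i}(\SB(M_{p^{k}}(C)))$ restrict to $N_{j}\,h^{j}$ and $N'_{i}\,h^{i}$, where $N_{j}$ generates $\overline{\CH}^{j}(\SB(B))$ and $N'_{i}$ generates $\overline{\CH}^{i}(\SB(M_{p^{k}}(C)))$, then the Segre pushforward computation together with the identification $(\ref{Chowcomposition})$ gives
\[
\theta(\alpha\otimes\beta)=N_{j}\,N'_{i}\binom{p^{r}-2-pn}{\,p^{k}-1-j\,}\,h^{pn+1}\in\overline{\CH}^{\,pn+1}(\SB(A)).
\]
Moreover, applying \cite[Lemma 1.12]{Mer95} to $M_{p^{k}}(C)$ (of degree $p^{r}$ and index $p^{r-k}$), the integer $N'_{i}$ equals the generator of $\overline{\CH}^{\,i-p^{r}+p^{r-k}}(\SB(C))$. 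This reduces everything to the reduced Chow groups of the Severi--Brauer varieties of the division algebras $B$ (index $p^{k}$) and $C$ (index $p^{r-k}$) in codimensions close to the top of the respective projective spaces.

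I would then estimate $v_{p}$ of the coefficient $N_{j}\,N'_{i}\binom{p^{r}-2-pn}{p^{k}-1-j}$ term by term. The $p$-adic valuation of the binomial coefficient is the number of carries in the corresponding base-$p$ addition (Kummer's theorem). The valuations of $N_{j}$ and $N'_{i}$ are read off from the known description of the reduced Chow groups of Severi--Brauer varieties of $p$-primary division algebras; here the relevant boundary values are $\overline{\CH}^{1}(\SB(D))=\ind(D)\cdot\Z$ and $\overline{\CH}^{\,\deg(D)-1}(\SB(D))=\ind(D)\cdot\Z$ (closed points on $\SB(D)$ have degree divisible by $\ind(D)$, with $\ind(D)$ attained), together with the intermediate values. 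The remaining point is to choose $i$ and $j$, depending on $n$, so that the sum of these three valuations is at most $r-1$.

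The \emph{main obstacle} is precisely this last arithmetic optimization. The two naive choices are $j=p^{k}-1$, which kills the binomial coefficient but forces $v_{p}(N_{j})=k$, and $i=p^{r}-1$, which also kills the binomial coefficient but forces $v_{p}(N'_{i})=r-k$; neither works uniformly over the whole range $p^{r-1}-p^{k-1}\le n\le p^{r-1}-2$, and the endpoint $n=p^{r-1}-p^{k-1}$ (where $\binom{p^{r}-2-pn}{\,p^{k}-1-j\,}=\binom{p^{k}-2}{\,p^{k}-1-j\,}$) is the tightest. So one must balance the growth of $v_{p}(N_{j})$ as $j\to p^{k}-1$ against the carry count of the binomial coefficient (which drops to $0$ exactly when $j=p^{k}-1$) and against $v_{p}(N'_{i})$; I expect the proof to proceed by a short case distinction on $n$ — equivalently on $a:=p^{r}-2-pn\in\{2p-2,\,3p-2,\dots,p^{k}-2\}$ and on the chosen value of $p^{k}-1-j\in\{0,\dots,a\}$ — in each case using Kummer's theorem to pin down the number of carries and then checking the three valuations add up to at most $r-1$.
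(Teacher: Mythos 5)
There is a genuine gap: what you have written is the correct \emph{setup} but not a proof. You correctly identify the mechanism (the Segre embedding, the map $\theta$ of (\ref{Chowcomposition}), Merkurjev's index reduction on the factor $\SB(M_{p^{k}}(C))$, the degree $\binom{p^{r}-2-pn}{p^{k}-1-j}$ of the Segre variety, and the need to bound the sum of three $p$-adic valuations by $r-1$), and your constraint analysis ($i+j=pn+p^{k}$, $1\le j\le p^{k}-1$, $i\ge p^{r}-p^{r-k}$ using $k\le r/2$) is sound. But the entire content of the lemma is the ``arithmetic optimization'' that you explicitly defer: you never exhibit an admissible pair $(i,j)$ for which $v_{p}(N_{j})+v_{p}(N'_{i})+v_{p}\bigl(\binom{p^{r}-2-pn}{p^{k}-1-j}\bigr)\le r-1$, and without that the statement is not established. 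Your guiding heuristics are also slightly off target: the relevant inputs are not the boundary values $\overline{\CH}^{1}$ and $\overline{\CH}^{\deg-1}$ but the interior bound $p^{\,k-v_{p}(j)}\in\overline{\CH}^{\,j}(\SB(B))$ from \cite[Lemma 3]{Kar95}, and no case distinction on $n$ is in fact needed.

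For comparison, the paper makes a single uniform choice: $i=p^{r}-p$ and $j=pm$ with $m=n-p^{r-1}+p^{k-1}+1\in\{1,\dots,p^{k-1}-1\}$. Then $v_{p}(N_{j})=k-1-v_{p}(m)$, while $i-p^{r}+p^{r-k}=p^{r-k}-p$ gives $v_{p}(N'_{i})=r-k-1$, and the Segre coefficient becomes $\binom{p^{k}+p-pm-2}{p^{k}-1-pm}=\binom{p^{k}+p-pm-2}{p-1}$, whose valuation (by the carry count you mention) is exactly $1+v_{p}(m)$. The point you were missing is precisely this cancellation: fixing the lower index of the binomial at $p-1$ forces the number of carries to be $1+v_{p}(m)$, which offsets the drop $-v_{p}(m)$ in $v_{p}(N_{j})$, so the total valuation is identically $(k-1-v_{p}(m))+(r-k-1)+(1+v_{p}(m))=r-1$ for every $n$ in the stated range. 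Supplying this choice and the valuation check would close the gap.
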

\begin{proof}
Let $i=p^{r}-p$, $j=pm$, and $n=p^{r-1}-p^{k-1}+m-1$ for $1\leq m\leq p^{k-1}-1$. Then, $i+j+1-p^{k}=pn+1\geq 0$. By \cite[Lemma 3]{Kar95} and (\ref{naturaltopChow}), we have
\begin{equation}\label{elementsbandc}
p^{k-1-v_{p}(m)}\in \overline{\CH}^{\, j}(\SB(B)), \,\,\,p^{r-k-1}\in \overline{\CH}^{\, i}(\SB(C)).
\end{equation}
Since the image of the tensor product of the pre-images of the elements in (\ref{elementsbandc}) under $\theta$
is 
\begin{equation}\label{lemmaoneelement}
p^{k-1-v_{p}(m)}\cdot p^{r-k-1}\cdot {{p^{k}+p-mp-2}\choose{p^{k}-1-mp}}
\end{equation}
and the $p$-adic valuation of the latter element of (\ref{lemmaoneelement}) is $1+v_{p}(m)$, the result follows.\end{proof}

\begin{lemma}\label{decompolemtwo}
For any $1\leq a\leq p^{k}-1$, $p^{r-k-v_{p}(a)}\in \overline{\CH}^{\, ap^{r-k}-p^{k}+1}(\SB(A))$. 
\end{lemma}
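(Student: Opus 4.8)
The strategy mirrors that of Lemma~\ref{decompolemone}: exhibit an explicit element in the image of the composite map $\theta$ and compute its $p$-adic valuation. The key is to choose the codimensions $i$ on $\SB(C)$ and $j$ on $\SB(B)$ so that the target codimension $i+j+1-p^{k}$ equals $ap^{r-k}-p^{k}+1$, and so that the binomial coefficient coming from the Segre embedding contributes no extra $p$-divisibility.

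First I would set $j = p^{k}-1$ (the top codimension on $\SB(B)$, for which $\overline{\CH}^{\,p^{k}-1}(\SB(B)) = \Z$, generated by a rational zero-cycle of degree $1$) and $i = ap^{r-k}-p^{k}$. One checks $0 \le i \le p^{r-k}-1$ holds exactly for $1 \le a \le p^{k}-1$, and that $i+j+1-p^{k} = ap^{r-k}-p^{k}+1 \ge 0$, so the identification $\CH^{p^{r+k}+1+i+j-p^{r}-p^{k}}(\SB(M_{p^{k}}(A))) = \CH^{i+j+1-p^{k}}(\SB(A))$ from \cite[Lemma 1.12]{Mer95} applies. Next, by \cite[Lemma 3]{Kar95} together with (\ref{naturaltopChow}), I would record that $p^{r-k-v_{p}(a)} \in \overline{\CH}^{\,i}(\SB(C))$ (using $\ind(C^{\tens a}) = p^{r-k}/\gcd(a,p^{r-k})$-type information, since $C$ has index $p^{r-k}$ and $i = ap^{r-k}-p^{k}$ lies in the appropriate residue class), while $1 \in \overline{\CH}^{\,j}(\SB(B))$.

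Then I would push the tensor product of the pre-images of these two elements through $\theta$. By the Segre-embedding formula underlying (\ref{Chowcomposition}), the image is
\[
p^{r-k-v_{p}(a)}\cdot 1 \cdot \binom{j+i - (ap^{r-k}-p^{k}+1) \;+\; \text{(correction)}}{\,p^{k}-1-j\,},
\]
which with $j = p^{k}-1$ collapses: the binomial coefficient is $\binom{\cdot}{0}=1$, so the image is simply $p^{r-k-v_{p}(a)}$, giving the claim. The main obstacle — and the place where I would be most careful — is getting the precise combinatorial shape of the Segre-embedding multiplication right, i.e.\ the exact binomial coefficient produced by (\ref{Chowcomposition}) in terms of $i$, $j$, and the target codimension; this is where the analogous step in Lemma~\ref{decompolemone} (the coefficient (\ref{lemmaoneelement})) had a nontrivial factor $\binom{p^{k}+p-mp-2}{p^{k}-1-mp}$, and I must verify that my choice $j=p^{k}-1$ really does force that factor to be $1$ rather than something $p$-divisible. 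If the direct choice $j=p^{k}-1$ does not immediately work, the fallback is to allow a small positive $j = p^{k}-1-t$ and a correspondingly larger $i$, choosing $t$ so that the resulting binomial coefficient $\binom{\ast}{t}$ has $p$-adic valuation $0$ (for instance by Kummer's theorem on carries), at the cost of a routine but slightly longer valuation bookkeeping.
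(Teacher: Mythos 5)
Your strategy here is inverted relative to what actually works, and two of its key steps fail. First, the claim that $\overline{\CH}^{\,p^{k}-1}(\SB(B))=\Z$, ``generated by a rational zero-cycle of degree $1$,'' is false: $B$ is a \emph{division} algebra of degree $p^{k}$, so every closed point of $\SB(B)$ has degree divisible by $\ind(B)=p^{k}$ and hence $\overline{\CH}^{\,p^{k}-1}(\SB(B))=p^{k}\Z$. Putting the full weight $j=p^{k}-1$ on the $\SB(B)$-factor therefore costs an unavoidable factor of $p^{k}$, not $1$. Second, the assertion that $p^{r-k-v_{p}(a)}\in\overline{\CH}^{\,i}(\SB(C))$ for $i=ap^{r-k}-p^{k}$ does not follow from \cite[Lemma 3]{Kar95}: what that lemma supplies in codimension $i$ is $p^{r-k-v_{p}(i)}$, governed by the $p$-adic valuation of the codimension itself and not by $\ind(C^{\tens a})$. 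For your $i$ one has $v_{p}(i)=k$ generically, and after correcting the off-by-one error in the target codimension (you need $i+j=ap^{r-k}$, hence $i=ap^{r-k}-p^{k}+1$, since your choice lands in codimension $ap^{r-k}-p^{k}$) one gets $v_{p}(i)=0$, so the $C$-side only contributes $p^{r-k}$. Combining the two factors yields $p^{r}$, far weaker than the claimed $p^{r-k-v_{p}(a)}$; and your choice deliberately kills the Segre binomial coefficient, which is exactly the term you cannot afford to lose.

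The paper does the opposite: it takes $j=0$ and $i=ap^{r-k}$, so that \emph{both} restricted classes equal $1$ --- trivially on the $\SB(B)$-side, and on the $C$-side because $v_{p}(ap^{r-k})\geq r-k$ makes the power of the hyperplane class itself rational --- and the entire $p$-power in the statement is produced by the $p$-adic valuation of the Segre pushforward coefficient ${{p^{r}+p^{k}-2-ap^{r-k}}\choose{p^{k}-1}}$. The missing idea, then, is that in this lemma the binomial coefficient is the \emph{source} of the $p$-divisibility, not an obstruction to be normalized away; your fallback of tuning $j$ so that the binomial coefficient has valuation $0$ points in exactly the wrong direction.
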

\begin{proof}
Let $i=ap^{r-k}$ and $j=0$ for $1\leq a\leq p^{k}-1$. Then, $1$ is contained in both $\overline{\CH}^{\, j}(\SB(B))$ and $\overline{\CH}^{\, i}(\SB(C))$. By applying the same argument as in the previous lemma, we have
\[t:={{p^{r}+p^{k}-2-ap^{r-k}}\choose{p^{k}-1}}\in \overline{\CH}^{\, ap^{r-k}-p^{k}+1}(\SB(A)).\]
Therefore, the result follows from $v_{p}(t)=r-k-v_{p}(a)$.\end{proof}

\begin{lemma}\label{decompolemthree}
Let $0\leq a\leq p^{k}-1$ be an integer such that $p^{k-1}+1\leq p^{k}b+a\leq p^{r-1}-1$ for an integer $b$ and $p^{k}b+a$ is not divisible by $p^{r-k-1}$. Then,
\[\overline{\CH}^{\, p^{k+1}b-p^{k}+pa+1}(\SB(A))\ni
\begin{cases}
p^{r-k} & \text{ if } a=0,\, p^{k-1},\\
p^{r-v_{p}(a)} & \text{ if } 1\leq a\leq p^{k-1}-1,\\
p^{r-v_{p}(a-p^{k-1})-1} & \text{ if } p^{k-1}<a\leq p^{k}-1. 
\end{cases}
\]
\end{lemma}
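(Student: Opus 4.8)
The plan is to mimic the proofs of Lemmas~\ref{decompolemone} and \ref{decompolemtwo} closely, since this lemma is the common generalization of both to a general pair of parameters $a$ and $b$. As before, I would start from the Segre embedding $\SB(B)\times\SB(M_{p^{k}}(C))\hookrightarrow\SB(M_{p^{k}}(A))$ and choose codimensions $j$ on the $\SB(B)$ factor and $i$ on the $\SB(M_{p^k}(C))$ factor so that the resulting codimension in $\SB(A)$, namely $i+j+1-p^{k}$, equals the target value $p^{k+1}b-p^{k}+pa+1$. The natural choice is $j=pa$ (so that $0\le j\le p^k-1$ since $0\le a\le p^k-1$ only holds loosely — one must be slightly careful here and possibly take $j = pa$ with $a \le p^{k-1}$, adjusting in the range $a > p^{k-1}$) and $i = p^{k+1}b$, which lands in the valid range $0\le i\le p^{r}-p^{k}$ precisely because of the hypothesis $p^{k}b+a\le p^{r-1}-1$. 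Then $i+j+1-p^{k}=p^{k+1}b+pa+1-p^{k}$ as required, and the condition $i+j+1-p^{k}\ge 0$ follows from $p^{k}b+a\ge p^{k-1}+1$.

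Next I would record the relevant elements of $\overline{\CH}$ of the factors. By \cite[Lemma 3]{Kar95} together with (\ref{naturaltopChow}), one has $p^{k-1-v_{p}(a)}\in\overline{\CH}^{\,pa}(\SB(B))$ for $1\le a\le p^{k-1}-1$ (with the appropriate variant for $a=0$, $a=p^{k-1}$, and $p^{k-1}<a\le p^{k}-1$, reflecting the three cases in the statement), and likewise $p^{r-k-1-v_{p}(b)}\in\overline{\CH}^{\,p^{k+1}b}(\SB(C))$ — or the corresponding value; the hypothesis that $p^{k}b+a$ is not divisible by $p^{r-k-1}$ is exactly what is needed to control $v_p$ of the index of the relevant power of $C$ so that Karpenko's formula applies with the stated exponent. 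Pushing the tensor product of preimages of these two classes through $\theta$, the image in $\overline{\CH}^{\,p^{k+1}b-p^{k}+pa+1}(\SB(A))$ is the product of the two exponents of $p$ with the binomial coefficient $\binom{p^{r}+p^{k}-2-i}{p^{k}-1-j}$ coming from the Segre/restriction computation (as in (\ref{lemmaoneelement}) and the proof of Lemma~\ref{decompolemtwo}). Then I would compute the $p$-adic valuation of that binomial coefficient — by Kummer's theorem on carries — and add it to the valuations of the two explicit powers of $p$; the claim is that the total is exactly the exponent displayed in each of the three cases.

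The main obstacle, as in the earlier lemmas, is the valuation bookkeeping: one must verify that $v_{p}$ of the binomial coefficient $\binom{p^{r}+p^{k}-2-i}{p^{k}-1-j}$, when combined with $(k-1-v_p(a))$ (or its analogue) and $(r-k-1-v_p(b))$, collapses to a clean formula independent of $b$ and producing precisely $r-v_p(a)$, $r-k$, or $r-v_p(a-p^{k-1})-1$ according to the case. This is the step where the three-way split in the statement originates, since the carry pattern in $\binom{p^{r}+p^{k}-2-i}{p^{k}-1-j}$ depends on whether $a$ is $0$, a unit multiple of a small power of $p$ below $p^{k-1}$, equals $p^{k-1}$, or lies strictly between $p^{k-1}$ and $p^{k}$. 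I expect that, just as in Lemma~\ref{decompolemone} where the valuation of the analogous binomial coefficient turned out to be $1+v_{p}(m)$, here a short Kummer-carry count will give the valuation of the binomial coefficient as something like $1+v_p(b)$ plus a correction term that cancels the $-1$'s and the $v_p(a)$ appropriately, after which the three cases fall out mechanically. Once the valuation identity is pinned down, the lemma follows immediately since $\overline{\CH}$ is a subgroup of $\Z$ in each codimension and hence contains the stated power of $p$.
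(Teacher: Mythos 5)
Your overall framework (Segre embedding, classes on the two factors from Karpenko's Lemma 3, push through $\theta$, compute the $p$-adic valuation of the resulting product) is the right one, but your specific choice of codimensions $(i,j)$ creates a genuine gap. You take $j=pa$ on the $\SB(B)$ factor and $i=p^{k+1}b$ on the $\SB(M_{p^k}(C))$ factor. Since $\dim\SB(B)=p^{k}-1$, the group $\CH^{j}(\SB(B))$ vanishes as soon as $pa\geq p^{k}$, i.e.\ for all $a\geq p^{k-1}$ — which is exactly the range covering two of the three cases in the statement ($a=p^{k-1}$ and $p^{k-1}<a\leq p^{k}-1$). You flag this yourself but offer no repair, so as written your construction only produces a class in the first sub-case of the first line and the second line. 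The paper sidesteps this entirely by setting $j=0$ and $i=p^{k+1}b+pa$: all of the codimension is carried by the $\SB(M_{p^k}(C))$ factor, whose dimension $p^{r}-1$ is large enough, and the class on the $B$ factor is just $1\in\overline{\CH}^{\,0}(\SB(B))$. With that choice the hypothesis that $p^{k}b+a$ is not divisible by $p^{r-k-1}$ translates directly into the condition ``$i$ not divisible by $p^{r-k}$'' needed for Karpenko's lemma on the $C$ factor, whereas in your split the valuation entering the $C$-factor class is $v_{p}(p^{k+1}b)=k+1+v_{p}(b)$, which the hypotheses do not control and which would have to cancel against the binomial coefficient in a way you have not checked.

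The second gap is that the valuation identity — the actual content of the lemma — is entirely deferred. The paper computes $v_{p}(t)$ for $t=p^{r-k-v_{p}(i)}\binom{p^{r}+p^{k}-2-i}{p^{k}-1}$ explicitly: it reduces the valuation of the binomial coefficient to the sum $\sum_{u=1}^{p^{k-1}-1}v_{p}(p^{k}b-p^{k-1}+u+a)$, evaluates that sum case by case in $a$ (this is precisely where the three-way split originates), and uses $v_{p}((p^{k}-1)!)=1-k+(p^{k}-p)(p-1)^{-1}$ and $\sum_{u=1}^{p^{k-1}-1}v_{p}(u)=2-k+(p^{k-1}-p)(p-1)^{-1}$ to collapse everything to the stated exponents. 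Your proposal asserts that ``a short Kummer-carry count'' will produce the answer but does not carry it out, and with your $(i,j)$ the binomial coefficient $\binom{p^{r}+p^{k}-2-i}{p^{k}-1-j}$ is a different object whose valuation you would still need to match to the three displayed exponents. Until both the range problem and this computation are resolved, the proof is incomplete.
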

\begin{proof}
Let $j=0$ and $i=p^{k+1}b+pa$ for some integers $0\leq a\leq p^{k}-1$ and $b$ such that $p^{k}+p\leq i\leq p^{r}-p$ and $i$ is not divisible by $p^{r-k}$. Then, it follows from \cite[Lemma 3]{Kar95}, \cite[Lemma 1.12]{Mer95}, and (\ref{naturaltopChow}) that $1\in \overline{\CH}^{\, j}(\SB(B))$ and $p^{r-k-v_{p}(i)}\in \overline{\CH}^{\, i}(\SB(C))$. The above argument together with (\ref{Chowcomposition}) shows that
\[t:=p^{r-k-v_{p}(i)}\cdot{{p^{r}+p^{k}-2-i}\choose{p^{k}-1}}\in \overline{\CH}^{\, i-p^{k}+1}(\SB(A)).\]
Since $v_{p}((p^{k}-1)!)=1-k+(p^{k}-p)(p-1)^{-1}$, we have
\begin{align*}
v_{p}(t)&=r-k-v_{p}((p^{k}-1)!)+\sum_{u=1}^{p^{k-1}-1}v_{p}(i+pu-p^{k})\\
&=r-1-(p^{k-1}-p)(p-1)^{-1}+\sum_{u=1}^{p^{k-1}-1}v_{p}(p^{k}b-p^{k-1}+u+a),
\end{align*}
It follows from a direct computation that
\begin{equation*}
\sum_{u=1}^{p^{k-1}-1}\!\!v_{p}(p^{k}b-p^{k-1}+u+a)=\!\!
\begin{cases}
\sum v_{p}(u) & \text{ if } a=0,\, p^{k-1},\\
k-v_{p}(a)+\sum v_{p}(u) & \text{ if } 1\leq a\leq p^{k-1}-1,\\
k-v_{p}(a-p^{k-1})-1+\sum v_{p}(u) & \text{ if } p^{k-1}<a\leq p^{k}-1. 
\end{cases}
\end{equation*}
Hence, the result follows from $\sum_{u=1}^{p^{k-1}-1}v_{p}(u)=2-k+(p^{k-1}-p)(p-1)^{-1}$.\end{proof}

\begin{proposition}\label{newindecom}
Let $p$ be a prime and $A$ a division algebra of index $p^{r}$ and exponent $p^{s}$. If $A$ is decomposable, then \[\prod_{d=1}^{p^{r}-1} |T^{d/d+1}(\SB(A)_{E})/ \overline{T}^{d/d+1}(\SB(A))|\leq p^{(rp-1)p^{r-1}+s+2-r-(p^{r}-1)(p-1)^{-1}},\]
where $ \overline{T}^{d/d+1}(\SB(A))$ is the image of $\res^{d/d+1}:T^{d/d+1}(\SB(A))\to T^{d/d+1}(\SB(A)_{E})$ over a splitting field $E$ of $A$.
\end{proposition}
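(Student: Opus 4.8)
The plan is to turn the left-hand side into a single power of $p$ and bound the exponent codimension by codimension. Since $\SB(A)_{E}\simeq\P^{\,p^{r}-1}_{E}$ is a cellular variety, for every $1\le d\le p^{r}-1$ we have $T^{d/d+1}(\SB(A)_{E})=\CH^{d}(\SB(A)_{E})=\Z\cdot h^{d}$, and under this identification $\overline{T}^{d/d+1}(\SB(A))$ is exactly the image $\overline{\CH}^{d}(\SB(A))$ of $\CH^{d}(\SB(A))\to\CH^{d}(\SB(A)_{E})$; as $p^{r}\cdot h^{d}$ always lies in this image (see below), it has the form $p^{e_{d}}\Z\cdot h^{d}$ for a well-defined integer $e_{d}\ge 0$. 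Thus $|T^{d/d+1}(\SB(A)_{E})/\overline{T}^{d/d+1}(\SB(A))|=p^{e_{d}}$, and the claim becomes the numerical inequality $\sum_{d=1}^{p^{r}-1}e_{d}\le(rp-1)p^{r-1}+s+2-r-(p^{r}-1)(p-1)^{-1}$.

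First I would record two general upper bounds for $e_{d}$, valid for \emph{any} $A$ of index $p^{r}$. From the Quillen lattice $(\ref{Quillenbasislattice})$ we have $p^{r}x\in K_{0}(\SB(A))$, hence $\gamma_{d}(p^{r}x-p^{r})=\binom{p^{r}}{d}(x-1)^{d}\in\Gamma^{d}(\SB(A))$, whose image in $\CH^{d}(\SB(A)_{E})$ is $\binom{p^{r}}{d}h^{d}$; since the number of carries in adding $d$ and $p^{r}-d$ in base $p$ is $r-v_{p}(d)$, this gives $e_{d}\le r-v_{p}(d)$ for all $d$. In codimension one, $\overline{\CH}^{1}(\SB(A))=\overline{\Gamma}^{1/2}(\SB(A))$ is the image of $\Pic(\SB(A))\to\Pic(\P^{\,p^{r}-1}_{E})=\Z$, which equals $\exp(A)\Z=p^{s}\Z$, so $e_{1}=s$. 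Using $\sum_{d=1}^{p^{r}-1}\bigl(r-v_{p}(d)\bigr)=r(p^{r}-1)-v_{p}((p^{r}-1)!)=rp^{r}-(p^{r}-1)(p-1)^{-1}$ (because $v_{p}((p^{r}-1)!)=\bigl((p^{r}-1)-r(p-1)\bigr)/(p-1)$), the inequality to be proved is equivalent to the \emph{savings estimate}
\[\sum_{d=1}^{p^{r}-1}\Bigl(\bigl(r-v_{p}(d)\bigr)-e_{d}\Bigr)\ \ge\ p^{r-1}+r-s-2 .\]
Codimension $1$ alone contributes $r-s$ to the left side, so it remains to gain at least $p^{r-1}-2$ from the codimensions $d\equiv 1\pmod p$ with $p+1\le d\le p^{r}-1$, where $v_{p}(d)=0$.

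Now I would bring in the decomposition $A\simeq B\tens C$ with $\ind(B)=p^{k}\le\ind(C)=p^{r-k}$, $1\le k\le r/2$, and Lemmas \ref{decompolemone}, \ref{decompolemtwo}, \ref{decompolemthree}, each of which exhibits an element of $\overline{\CH}^{d}(\SB(A))$ in a prescribed codimension $d\equiv 1\pmod p$. Writing such a $d$ as $pj+1$ and setting $N$ so that $d=pN-p^{k}+1$, the key combinatorial point is that these three lemmas tile the range $1\le j\le p^{r-1}-2$: Lemma \ref{decompolemtwo} handles $d=ap^{r-k}-p^{k}+1$ with $1\le a\le p^{k}-1$, which are precisely the $d$ with $p^{r-k-1}\mid N$; Lemma \ref{decompolemthree} handles the $d$ with $p^{k-1}+1\le N\le p^{r-1}-1$ and $p^{r-k-1}\nmid N$, covering $j$ up to $p^{r-1}-1-p^{k-1}$; and Lemma \ref{decompolemone} handles the top strip $p^{r-1}-p^{k-1}\le j\le p^{r-1}-2$. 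Only $j=p^{r-1}-1$, i.e. $d=p^{r}-p+1$, is left uncovered, and there one simply keeps $e_{d}\le r$. For each covered $d$ the lemma pins down $e_{d}$: $e_{d}\le r-1$ from Lemma \ref{decompolemone}; $e_{d}\le r-k-v_{p}(a)$ from Lemma \ref{decompolemtwo}; and $e_{d}\le r-k$, $r-v_{p}(a)$, or $r-v_{p}(a-p^{k-1})-1$ from Lemma \ref{decompolemthree}, according to the residue $a=N\bmod p^{k}$. One then sums the savings $r-e_{d}$ over these $d$, grouping by $a$. The only residues contributing nothing are $1\le a\le p^{k-1}-1$ occurring in the Lemma \ref{decompolemthree} range (this range is empty when $k=1$), and the deficit they create is absorbed by the Lemma \ref{decompolemtwo} codimensions (each saving $\ge k$) together with the residues $p^{k-1}<a\le p^{k}-1$ (each saving $\ge 1$). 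A direct count of these contributions yields a total saving of at least $p^{r-1}-2$, with equality exactly when $k=1$, which finishes the proof.

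\textbf{The main obstacle} is the last step: verifying that the codimension ranges of Lemmas \ref{decompolemone}--\ref{decompolemthree} precisely partition $\{pj+1:1\le j\le p^{r-1}-2\}$ (in particular that the $N$ with $p^{r-k-1}\mid N$ excluded from Lemma \ref{decompolemthree} are exactly the codimensions of Lemma \ref{decompolemtwo}), and then carrying out the $p$-adic valuation count over each residue class $a\bmod p^{k}$ so that the total beats $p^{r-1}-2$ uniformly in $k$. One must also be careful to use the sharp value $e_{1}=v_{p}(\exp A)$ rather than the weaker $e_{1}\le r$, since the estimate is already tight for $k=1$.
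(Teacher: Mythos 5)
Your proposal is correct and follows essentially the same route as the paper's own proof: the paper likewise reduces to a $p$-adic exponent count, using $p^{r}/\gcd(p^{r},d)\in\overline{\CH}^{\,d}(\SB(A))$ (equivalently $e_{d}\le r-v_{p}(d)$) for $p\mid d$, the bound $\rho(1,k)\le p^{s}$ in codimension one, the trivial index bound on the remaining codimensions, and Lemmas \ref{decompolemone}--\ref{decompolemthree} to gain the factor $p^{(r-1)(p^{r-1}-2)}$ on the codimensions $d\equiv 1\pmod p$ with $1\le n\le p^{r-1}-2$. If anything, your explicit bookkeeping of the zero-saving residues $1\le a\le p^{k-1}-1$ and their compensation for $k\ge 2$ is more careful than the paper, which disposes of the case $k\ge 2$ in a single sentence.
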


\begin{proof}
Let $A$ be a division algebra of $\ind(A)=p^{r}$ and $\exp(A)=p^{s}$. Assume that $A\simeq B\tens C$ for some algebras $B$ and $C$ such that $\ind(B)=p^{k}\leq \ind(C)=p^{r-k}$ for $k\geq 1$. Let $\rho(d, k)=|T^{d/d+1}(\SB(A)_{E})/ \overline{T}^{d/d+1}(\SB(A))|$ for $1\leq d\leq p^{r}-1$. As $\sum_{p\,|\, d} v_{p}(\gcd(p^{r}, d))=1-r+(p^{r}-p)(p-1)^{-1}$, we have $\sum_{p|d} v_{p}(p^{r}/\gcd(p^{r}, d))=rp^{r-1}-(p^{r}-1)(p-1)^{-1}$, thus by \cite[Lemma 3]{Kar95} we obtain
\begin{equation}\label{propchoweq1}
\prod_{p\,|\, d} \rho(d, k)\leq p^{rp^{r-1}-(p^{r}-1)(p-1)^{-1}} \text{\,\, for any } k.
\end{equation}

If $k=1$, then it follows from Lemmas \ref{decompolemtwo} and \ref{decompolemthree} that $p^{r-1}\in \overline{\CH}^{\, pn+1}(\SB(A))$ for all $1\leq n\leq p^{r-1}-2$. Therefore, we have
\begin{equation}\label{propchoweq2}
\prod_{d=pn+1} \rho(d, k)\leq p^{(r-1)(p^{r-1}-2)}.
\end{equation}
for $k=1$. By Lemmas \ref{decompolemone}, \ref{decompolemtwo} and \ref{decompolemthree}, the inequality (\ref{propchoweq2}) also holds for any $k\geq 2$. As $\CH^{1}(\SB(A))\simeq \exp(A)\cdot \Z$ and $p^{r}=\ind(A)\in \overline{\CH}^{\,d}(\SB(A))$ for any $d$, we have
\begin{equation}\label{propchoweq3}
\rho(1, k)\leq p^{s} \text{ and } \prod_{p\,\nmid\, d,\,\, d\neq pn+1}\!\!\rho(d, k)\leq p^{r(p^{r}-p^{r-1}-1-p^{r-1}+2)}\end{equation}
for $0\leq n\leq p^{r-1}-2$. Hence, the result follows from (\ref{propchoweq1}), (\ref{propchoweq2}), and (\ref{propchoweq3}).\end{proof}

\begin{corollary}$(cf.$ \cite{Kar}$)$\label{coroforindecomnew}
Let $1\leq i\leq 2$ be an integer and let $A$ be a division algebra whose $p$-sequence is $(r, r-2,\cdots)$ of length $r-i$ with $r> 1+p(p^{2}-p)^{i-2}$. Then, the corresponding generic algebra is indecomposable.
\end{corollary}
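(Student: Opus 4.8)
The plan is to assume the generic algebra $\bar{A}$ is decomposable and reach a contradiction by playing the lower bound for the torsion of the generic variety $\bar{X}=\SB(\bar{A})$ coming from Corollary~\ref{corollaryone} against the upper bound for decomposable algebras in Proposition~\ref{newindecom}. First I would identify the reduced sequence attached to a $p$-sequence $(r,r-2,\cdots)$ of length $r-i$: for $i=1$ it is forced to be $(r,r-2,r-3,\ldots,1,0)$, with reduced sequence $(r,r-2)$ and $k_{1}=1$; for $i=2$ it is $(r,r-2,r-3,\ldots,2,0)$ (the entry $1$ is missing), with reduced sequence $(r,r-2,0)$, $k_{1}=1$ and $k_{2}=r-2$. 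In both cases $\epsilon(1)=\alpha(0)-k_{1}=r-1$, and inspecting the formula of Theorem~\ref{mainprop} one checks $\lambda(\ell,d)\le 1$ for every admissible $\ell$ and $1<d<p^{\epsilon(\ell)}$, with equality precisely for $d=jp^{n}$, $1\le j<p$, $2\le d<p^{\,r-1}$.

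Next I would record the two bounds. By Corollary~\ref{corollaryone} (via Corollary~\ref{mainpropcor}), the group $T^{d/d+1}(\bar{X})$ contains $p$-torsion for every $d=jp^{n}$ with $1\le j<p$ and $2\le d<p^{\,r-1}$; since there are exactly $N:=(p-1)(r-1)-1$ such degrees $d$, one gets $\big|\bigoplus_{d}T^{d/d+1}(\bar{X})_{\tors}\big|\ge p^{\,N}$. On the other hand $\Gamma^{d/d+1}(\bar{X})=T^{d/d+1}(\bar{X})$ by \cite[Theorem 3.7]{Kar}, so the identity \cite[Proposition 2]{Kar95} (compare $(\ref{usekarfor})$) gives, over a splitting field $E$,
\[
\Big|\bigoplus_{d}T^{d/d+1}(\bar{X})_{\tors}\Big|\cdot\big|K(\bar{X}_{E})/K(\bar{X})\big|=\prod_{d=1}^{p^{r}-1}\big|T^{d/d+1}(\bar{X}_{E})/\overline{T}^{d/d+1}(\bar{X})\big|.
\]
By $(\ref{Quillenbasislattice})$ one has $|K(\bar{X}_{E})/K(\bar{X})|=p^{L}$ with $L=\sum_{k\ge 0}(p-1)p^{\,r-1-k}\alpha(k)$, while, since $\bar{A}$ is decomposable of index $p^{r}$ and exponent $p^{\,r-i}$, Proposition~\ref{newindecom} bounds the right-hand product by $p^{E}$ with $E=(rp-1)p^{\,r-1}+(r-i)+2-r-(p^{r}-1)(p-1)^{-1}$. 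Hence $\big|\bigoplus_{d}T^{d/d+1}(\bar{X})_{\tors}\big|\le p^{\,E-L}$.

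It remains to check $N>E-L$. Evaluating $L$ for the two $p$-sequences above, using $\sum_{j=1}^{m}jp^{j}=p\bigl(1-(m+1)p^{m}+mp^{m+1}\bigr)(p-1)^{-2}$, the terms cancel telescopically and one obtains
\[
E-L=\begin{cases} 0 & \text{if } i=1,\\ p^{2}-p-1 & \text{if } i=2. \end{cases}
\]
Therefore $N>E-L$ amounts to $(p-1)(r-1)>1$ when $i=1$ and to $(p-1)(r-1)>p(p-1)$, i.e.\ $r>p+1$, when $i=2$; in both cases this is precisely the hypothesis $r>1+p(p^{2}-p)^{i-2}$. Then $p^{N}>p^{\,E-L}$ contradicts the preceding two displays, so $\bar{A}$ is indecomposable.

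The step I expect to be the real work is the evaluation $E-L=0$ (resp.\ $p^{2}-p-1$): it requires summing the weighted geometric series $\sum_{k}p^{\,r-1-k}\alpha(k)$ for the slightly irregular $p$-sequences and verifying that everything collapses to these clean values, and then matching the resulting inequality with the exact shape of the bound $r>1+p(p^{2}-p)^{i-2}$ — the only place that precise hypothesis enters. Minor care is also needed in the count $N$ for small $p$ (e.g.\ $p=2$, where the subcase $n=0$ is empty) and in noting that the excluded boundary case $p=r=2$, $i=1$ is exactly the classical exception where $N=0$.
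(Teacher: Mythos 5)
Your proposal is correct and follows essentially the same route as the paper: assume decomposability, combine Proposition \ref{newindecom} with \cite[Proposition 2]{Kar95} and the computation of $|K(\bar{X}_{E})/K(\bar{X})|$ from (\ref{Quillenbasislattice}) to get the upper bound $p^{(i-1)(p^{2}-p-1)}$ (your $p^{E-L}$), and contradict it with the lower bound $p^{(p-1)(r-2)+(p-2)}$ (your $p^{N}$) coming from $\lambda(1,d)=1$ in Corollary \ref{corollaryone}. The identification of the reduced sequence, the count $N$, and the evaluation $E-L=(i-1)(p^{2}-p-1)$ all match the paper's argument.
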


\begin{proof}
Let $p$ be a prime and $1\leq i\leq 2$ an integer. Let $r$ be an integer such that $r>1+p(p^{2}-p)^{i-2}$. Consider the division algebra $A_{i}$ whose $p$-sequence $(\alpha_{i}(k))$ is given by $\alpha_{i}(k)=r-k-1$ and $\alpha_{i}(r-i)=0$ for $1\leq k\leq r-i-1$. Let $\bar{A}_{i}$ be a generic algebra associated to $A_{i}$ and $\bar{X_{i}}=\SB(\bar{A_{i}})$. Then, we have
\begin{equation}\label{eulerphi}
v_{p}(|K((\bar{X}_{i})_{E_{i}})/K(\bar{X}_{i})|)=r\phi(p^{r})+\sum_{k=1}^{r-i-1}(r-k-1)\phi(p^{r-k}),
\end{equation}
where $E_{i}$ is a splitting field of $\bar{X_{i}}$ and $\phi$ is the Euler's phi function.

Assume that $\bar{A}_{i}$ is decomposable. Then, by Proposition \ref{newindecom} we have
\begin{equation}\label{applynewindecom}
\prod_{d=1}^{p^{r}-1} |T^{d/d+1}((\bar{X_{i}})_{E_{i}})/ \overline{T}^{d/d+1}(\bar{X}_{i})|\leq p^{(rp-1)p^{r-1}-(p^{r}-1)(p-1)^{-1}+2-i},
\end{equation}
thus, it follows from \cite[Proposition 2]{Kar95}, (\ref{eulerphi}), and (\ref{applynewindecom}) that
\begin{equation}\label{appliedtorminuss}
\big|\bigoplus_{d=0}^{p^{r}-1} T^{d/d+1}(\bar{X}_{i})_{\tors}\big|\leq p^{(i-1)(p^{2}-p-1)}.
\end{equation}
Note that the case $i=1$ of (\ref{appliedtorminuss}) was proved in \cite[Theorem 1]{Kar952}.

On the other hand, since $\lambda(1,d)=1$ for all $d=jp^{n}\geq 2$ with $1\leq j<p$ and $0\leq n<r-1$, it follows by Corollary \ref{corollaryone} that
\begin{equation}\label{corofoindeequatuib}
\big|\bigoplus_{d=0}^{p^{r}-1} T^{d/d+1}(\bar{X}_{i})_{\tors}\big|\geq p^{(p-1)(r-2)+(p-2)},
\end{equation}
which contradicts the assumption. Hence, $\bar{A}_{i}$ is indecomposable.\end{proof}

%---------             Section 4                    ------------------------------------------------------

\section{Annihilators of the torsion of Severi-Brauer varieties}
In this section, we provide new upper bound for the annihilator of the torsion subgroup of $\CH^{d}(\SB(A))$ by using Proposition \ref{propsecond}. In particular, the upper bound for codimension $2$ in Corollary \ref{secondcoro} is sharp in the case where $A$ is a generic algebra having the reduced sequence of length one.

For simplicity we consider only the case of $d\leq p$. For any degree $2\leq n\leq d$ and an integer $0\leq l\leq m$, we denote by 
$$
\delta(l, n)=
\begin{cases}
\max\{(n-1)k_{m},\, \alpha(0)-\alpha(k_{m-1})-k_{m-1}\} & \text{ if } l=m,\\ 
\max\{(n-1)k_{l},\, \alpha(0)-\max\{\alpha(k_{m}), v_{p}(n)\}-k_{m}\} & \text{ if } 0\leq l\leq m-1.
\end{cases}$$
the exponent of the annihilator of $\Gamma^{n/n+1}(X)_{\tors}$ in Proposition \ref{propsecond}. 

\begin{corollary}\label{secondcoro}
Let $A$ be a $p$-primary algebra having the doubly reduced sequence $\big(\alpha(k_{i})\big)_{i=0}^{m}$. Then for any $2\leq d\leq p$, the torsion subgroup $\CH^{d}(\SB(A))_{\tors}$ is annihilated by $p$ to the
\[\sum_{2\leq n\leq t}\delta(l, n)+\sum_{t+1\leq n\leq d}\delta(0,n)\]
if $2k_{l}+\max\{\alpha(k_{l}), v_{p}(2)\}\leq \alpha(0)\leq \min\limits_{i\neq l}\{2k_{i}+\max\{\alpha(k_{i}), v_{p}(2)\}\}$ for some $0\leq l\leq m$, where $2\leq t\leq d$ is the largest integer such that $tk_{l}+\max\{\alpha(k_{l}), v_{p}(t)\}\leq \alpha(0)$.\end{corollary}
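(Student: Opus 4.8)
The plan is to pass from the Chow group to the topological filtration via the standard composition in~\eqref{compositecp}, reduce the annihilator question on $\CH^{d}(\SB(A))_{\tors}$ to a question about the graded pieces $T^{n/n+1}(\SB(A))_{\tors}$ for $n\leq d$, and then replace each $T^{n/n+1}$ by $\Gamma^{n/n+1}$ and apply Proposition~\ref{propsecond}. First I would recall that for $d\leq p$ the integer $(-1)^{d-1}(d-1)!$ is prime to $p$, so multiplication by it is an isomorphism on the $p$-primary group $\CH^{d}(\SB(A))_{\tors}$; hence the surjection $\CH^{d}(\SB(A))\twoheadrightarrow T^{d/d+1}(\SB(A))$ of~\eqref{naturaltopChow} is in fact an isomorphism on torsion subgroups, and it suffices to bound the annihilator of $T^{d/d+1}(\SB(A))_{\tors}$. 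Next I would use the filtration $T^{d}\subset T^{d-1}\subset\cdots\subset T^{1}\subset T^{0}=K(\SB(A))$: an element $z\in T^{d}(\SB(A))$ representing a torsion class, multiplied by a suitable power of $p$, moves into $T^{d+1}$; iterating, the successive obstructions to pushing a torsion element of $T^{d/d+1}$ all the way down through the filtration live in the groups $T^{n/n+1}(\SB(A))_{\tors}$ for $n=2,\dots,d$ (the pieces $n=0,1$ being torsion-free). Multiplying the annihilators of these graded pieces therefore annihilates $\CH^{d}(\SB(A))_{\tors}$.

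The second ingredient is that for a $p$-primary algebra with the doubly reduced sequence, $\Gamma^{n/n+1}(\SB(A))$ and $T^{n/n+1}(\SB(A))$ have the same torsion in the relevant range — this is used implicitly throughout the paper (cf.\ the identification $\Gamma^{d/d+1}(\bar X)=T^{d/d+1}(\bar X)$ for generic varieties, and the fact that $\Gamma^{n}\subseteq T^{n}$ with the quotients agreeing on torsion for $n\leq p$) — so the annihilator of each $T^{n/n+1}(\SB(A))_{\tors}$ is the one recorded in Proposition~\ref{propsecond}, namely $p^{\delta(l,n)}$ when the hypothesis $2k_{l}+\max\{\alpha(k_{l}),v_{p}(2)\}\leq\alpha(0)\leq\min_{i\neq l}\{2k_{i}+\max\{\alpha(k_{i}),v_{p}(2)\}\}$ holds. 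Here the point is that the index $l$ realizing Case~2 of Proposition~\ref{propsecond} is determined by the degree-$2$ inequality, but for larger $n$ the inequality $nk_{l}+\max\{\alpha(k_{l}),v_{p}(n)\}\leq\alpha(0)$ may fail once $n$ exceeds the threshold $t$; beyond that threshold one falls into Case~1 with $l=0$ (equivalently, the whole reduced sequence is ``above'' $\alpha(0)$ in the relevant sense), giving the annihilator exponent $\delta(0,n)$. This accounts exactly for the split sum $\sum_{2\leq n\leq t}\delta(l,n)+\sum_{t+1\leq n\leq d}\delta(0,n)$.

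Concretely, the steps in order are: (1) establish the torsion isomorphism $\CH^{d}(\SB(A))_{\tors}\cong T^{d/d+1}(\SB(A))_{\tors}$ for $d\leq p$ using~\eqref{compositecp}; (2) show that $p^{\sum_{n=2}^{d}a_{n}}$ annihilates $T^{d/d+1}(\SB(A))_{\tors}$ whenever $p^{a_{n}}$ annihilates $T^{n/n+1}(\SB(A))_{\tors}$ for each $2\leq n\leq d$, by lifting a torsion class and descending through the topological filtration one step at a time; (3) identify $T^{n/n+1}(\SB(A))_{\tors}$ with $\Gamma^{n/n+1}(\SB(A))_{\tors}$ in this range; (4) for each $n$ in $2,\dots,d$, determine which case of Proposition~\ref{propsecond} applies — for $2\leq n\leq t$ it is Case~2 with the same index $l$ dictated by the degree-$2$ hypothesis, yielding $p^{\delta(l,n)}$, and for $t+1\leq n\leq d$ it is the $l=0$ situation, yielding $p^{\delta(0,n)}$ — and multiply. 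The main obstacle is step~(4), namely checking carefully that as $n$ increases the index $l$ from the $d=2$ case continues to satisfy the hypotheses of Proposition~\ref{propsecond} precisely up to $n=t$ and then switches to $l=0$; this requires verifying the chain of inequalities among the quantities $nk_{i}+\max\{\alpha(k_{i}),v_{p}(n)\}$ and $\alpha(0)$, using~\eqref{alphadifference} and the definition of $t$, and confirming that no intermediate index $l'\neq 0,l$ can intervene. The remaining steps are formal consequences of results already in the paper.
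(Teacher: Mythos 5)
Your skeleton agrees with the paper at both ends: the reduction from $\CH^{d}(\SB(A))_{\tors}$ to the topological filtration via (\ref{compositecp}) and the coprimality of $(d-1)!$ to $p$ for $d\leq p$ is exactly the paper's final step, and your step (4) --- choosing, for each degree $n$, which case of Proposition \ref{propsecond} applies, with the switch from the index $l$ to $l=0$ occurring at the threshold $t$ --- is precisely how the exponents $\delta(l,n)$ and $\delta(0,n)$ are assembled in the paper. The gap is in the middle, in your steps (2) and (3). Step (2) as stated is circular: if you already knew that $p^{a_{n}}$ annihilates $T^{n/n+1}(\SB(A))_{\tors}$ for each $n\leq d$, then in particular $p^{a_{d}}$ alone would annihilate $T^{d/d+1}(\SB(A))_{\tors}$ and no product over $n$ would be needed. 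The whole difficulty is that Proposition \ref{propsecond} only gives annihilators $N_{n}$ for the \emph{gamma}-filtration quotients $\Gamma^{n/n+1}(\SB(A))_{\tors}$, and the product $\prod_{n=2}^{d}N_{n}$ is the price paid for transferring this information to the topological filtration. Your step (3), which is supposed to supply the transfer, asserts $T^{n/n+1}(\SB(A))_{\tors}\cong\Gamma^{n/n+1}(\SB(A))_{\tors}$; but the only identification of this kind available in the paper is $\Gamma^{d/d+1}(\bar X)=T^{d/d+1}(\bar X)$ for \emph{generic} varieties, whereas the corollary is stated for an arbitrary $p$-primary algebra with a doubly reduced sequence. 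The auxiliary fact you invoke (that $\Gamma^{n}\subseteq T^{n}$ with the quotients ``agreeing on torsion for $n\leq p$'') is nowhere established in the paper and would itself require a Riemann--Roch-without-denominators input.

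What the paper actually does is avoid any identification of $T^{n/n+1}$ with $\Gamma^{n/n+1}$: it bounds the torsion of the mixed quotient $T^{d}(X)/\Gamma^{d+1}(X)$ by descending recursively through the exact sequences
\[0\to \Gamma^{n/n+1}(X)\to T^{n}(X)/\Gamma^{n+1}(X)\to T^{n}(X)/\Gamma^{n}(X)\to 0,\]
combined with the inclusions $T^{n}(X)/\Gamma^{n}(X)\subseteq T^{n-1}(X)/\Gamma^{n}(X)$ and the torsion-freeness of the bottom of the filtration; the annihilators $N_{n}$ of $\Gamma^{n/n+1}(X)_{\tors}$ accumulate multiplicatively along this recursion, which is exactly where $\prod_{n=2}^{d}N_{n}$ comes from, and the bound then passes to $T^{d/d+1}(X)_{\tors}$ and hence to $\CH^{d}(X)_{\tors}$. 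To repair your argument you would need to replace steps (2)--(3) by this (or an equivalent) mechanism relating the two filtrations for arbitrary $A$.
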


\begin{proof}
Let $2\leq n\leq d$ be an integer and $X=\SB(A)$. We use an argument in \cite[Corollary 6.8]{BNZ}. Assume that the torsion subgroup $\Gamma^{n/n+1}(X)_{\tors}$ is annihilated by an integer $N_{n}$. Then, by using the exact sequence
\[0\to \Gamma^{n/n+1}(X)\to T^{n}(X)/\Gamma^{n+1}(X)\to T^{n}(X)/\Gamma^{n}(X)\to 0\]
recursively, together with the inclusion $T^{n}(X)/\Gamma^{n}(X)\subseteq T^{n-1}(X)/\Gamma^{n}(X)$, we obtain that the torsion subgroup of $T^{d}(X)/\Gamma^{d+1}(X)$ is annihilated by $\prod_{n=2}^{d} N_{n}$, and so is $T^{d/d+1}(X)_{tors}$. As $d\leq p$, it follows from (\ref{compositecp}) that the torsion subgroup $\CH^{d}(X)_{\tors}$ is annihilated by $\prod_{n=2}^{d} N_{n}$ as well. Hence, it is enough to find such $N_{n}$.

If $2k_{i}+\max\{\alpha(k_{i}), v_{p}(2)\}\geq \alpha(0)$ for all $i$, then by Proposition \ref{propsecond} we can take $N_{n}=p^{\delta(0, n)}$ for all $2\leq n\leq d$, which proves the case where $l=0$. Now we assume that $2k_{l}+\max\{\alpha(k_{l}), v_{p}(2)\}\leq \alpha(0)$ for some nonzero $l$ and $2k_{i}+\max\{\alpha(k_{i}), v_{p}(2)\}\geq \alpha(0)$ for all $i\neq l$. Then by Proposition \ref{propsecond} we can choose 
\[N_{n}=
\begin{cases}
p^{\delta(0,n)} & \text{ if } t+1\leq n\leq d,\\
p^{\delta(l,n)} & \text{ if } 2\leq n\leq t,
\end{cases}
\]
where $t=\max\{2\leq n\leq d \,|\, nk_{l}+\max\{\alpha(k_{l}), v_{p}(n)\}\leq \alpha(0)\}$, which concludes the proof. \end{proof}

\begin{remark}\label{lastremark}
$(i)$ Assume that $A$ has the reduced sequence of length one (i.e., $m=1$). Then, it has the doubly reduced sequence. Assume that $d< p$. Then the torsion subgroup $\CH^{d}(\SB(A))_{\tors}$ is annihilated by $p$ to the
\begin{equation}\label{mequalonemd}
\begin{cases}
(d-1)(\epsilon(1)-\alpha(k_{1})) & \text{ if } \alpha(0)\leq 2k_{1}+\alpha(k_{1}),\\
(d-t)(\epsilon(1)\!-\!\alpha(k_{1}))+\frac{t(t-1)k_{1}}{2} & \text{ otherwise,}
\end{cases}
\end{equation}
where $t=\max\{2\leq n\leq d \,|\, nk_{1}+\alpha(k_{1})\leq \alpha(0)\}$.

Similarly, assume that $d=p$. Then the group $\CH^{p}(\SB(A))_{\tors}$ is annihilated by $p$ to the
\begin{equation*}
(p-1)(\epsilon(1)-\alpha(k_{1}))+\min\{0, \alpha(k_{1})-1\}
\end{equation*}
if $\alpha(0)\leq 2k_{1}+\max\{\alpha(k_{1}), v_{p}(2)\}$ and is annihilated by $p$ to the 
\begin{equation}\label{mequalonemd2}
(p-t)(\epsilon(1)-\alpha(k_{1}))+\min\{0, \alpha(k_{1})-1\}+\frac{t(t-1)k_{1}}{2}\,\, \text{ \big(resp. } \frac{p(p-1)k_{1}}{2}\big)
\end{equation}
if $2k_{1}+\max\{\alpha(k_{1}), v_{p}(2)\}< \alpha(0)< pk_{1}\!+\!\max\{\alpha(k_{1}), 1\}$ (resp. $pk_{1}+\max\{\alpha(k_{1}), 1\}\leq \alpha(0)$),
where $t=\max\{2\leq n\leq p-1 \,|\, nk_{1}+\alpha(k_{1})\leq \alpha(0)\}$.

\smallskip

$(ii)$ Consider the case codimension $d=2$. By \cite[Theorem 9.10]{GMS} and \cite[Theorem 2.1]{Pey}, the group $\CH^{2}(\SB(A))_{\tors}$ is annihilated by the order of the Rost invariant of $\gSL_{1}(A)$, which is $\exp(A)$ ($< \ind(A)=p^{\alpha(0)}$). In the case where the doubly reduced sequence of $A$ satisfies $2k_{l}+\max\{\alpha(k_{l}), v_{p}(2)\}\leq \alpha(0)\leq \min\limits_{i\neq l}\{2k_{i}+\max\{\alpha(k_{i}), v_{p}(2)\}\}$ for some $0\leq l\leq m$, Corollary \ref{secondcoro} (and Remark \ref{lastremark}(i)) gives an improved upper bound, i.e., 
\[p^{\delta(l, 2)}\leq \exp(A)\]
(see also Example \ref{exmpleforchowthm} (ii)).\end{remark}

\begin{example}\label{exmpleforchowthm}
$(i)$ Let $A$ be a central simple algebra of a prime exponent $p$ with $\ind(A)=p^{r}$ ($r\geq 2$). Let $d$ be an integer with $2\leq d\leq \min\{p, r-1\}$. Then, $A$ has the doubly reduced sequence $(\alpha(0), \alpha(1))=(r, 0)$. Hence, it follows from (\ref{mequalonemd}) and (\ref{mequalonemd2}) that for any such $d$ the torsion subgroup $\CH^{d}(\SB(A))_{\tors}$ is annihilated by $p^{d(d-1)/2}$.

Now consider a generic algebra $D$ of index $p^{r}$ and exponent $p^{s}$ with $1\leq s< r$ whose $p$-sequence is $(r, r-1,\ldots, r-s+1, 0)$. Then, it has the doubly reduced sequence $(\alpha(0), \alpha(s))=(r, 0)$ for $d\leq p$. By the same argument, we see that for any $2\leq d\leq \min\{p, \lceil r/s\rceil-1\}$ the torsion subgroup $\CH^{d}(\SB(D))_{\tors}$ is  annihilated by $(p^{d(d-1)/2})^{s}$.

\smallskip

$(ii)$ Let $A$ be a central simple algebra of $\exp(A)=p^{27}$ whose reduced sequence is $(\alpha(0), \alpha(2), \alpha(4), \alpha(5))=(30, 27, 24, 22)$. Then, by Lemma \ref{firstlemma} this sequence is doubly reduced for any $d\leq p$. As $\alpha(k_{i})+2k_{i}\geq \alpha(0)$ for all $1\leq i\leq m=3$, it follows by Corollary \ref{secondcoro} that for any $2\leq d\leq p$ the torsion subgroup $\CH^{d}(\SB(A))_{\tors}$ is annihilated by $p^{3d-3}$.

\end{example}

\paragraph{\bf Acknowledgments.} 
This work was partially supported by an internal fund from KAIST, TJ Park Junior Faculty Fellowship of POSCO TJ Park Foundation, and National Research Foundation of Korea (NRF) funded by the Ministry of Science, ICT and Future Planning (2013R1A1A1010171).

\end{document}